\documentclass{amsart}
\usepackage[english,french]{babel}
\usepackage[utf8]{inputenc}
\usepackage[dvips,final]{graphics}
\usepackage{amsmath,amsfonts,amssymb,amsthm,amscd,array,stmaryrd,mathrsfs, mathdots, epigraph}
\usepackage{arydshln}
\usepackage[makeroom]{cancel}
\usepackage{pstricks}
 \usepackage[all]{xy}
 \usepackage{url}
\usepackage{ulem}
\usepackage{multirow, blkarray}
\usepackage{booktabs}
\usepackage{textcomp}
 \usepackage[final]{epsfig}
 \usepackage{color}

\usepackage{hyperref}
\theoremstyle{plain}
\newtheorem{thm}{Theorem}[section]

\newtheorem{lem}[thm]{Lemma}

\newtheorem{prop}[thm]{Proposition}
\newtheorem{defn}[thm]{Definition}

\theoremstyle{definition}
\newtheorem*{rem}{Remark}
\newtheorem*{ex}{Examples}

\begin{document}

\title[Product of a nilpotent and an involutive or idempotent matrix]{Decomposition of a square matrix into a product of a nilpotent matrix and an involutive or idempotent matrix}
\date{}
\author{Flavien Mabilat}
\subjclass[2020]{15A23}
\email{flavien.mabilat@univ-reims.fr}

\maketitle

\selectlanguage{french}
\begin{abstract}

Dans cette note, on va caractériser les matrices carrées de taille $n$ sur un corps commutatif qui peuvent s'écrire comme un produit d'une matrice nilpotente et d'une matrice involutive ainsi que celles qui peuvent s'écrire comme un produit d'une matrice nilpotente et d'une matrice idempotente, à l'aide des invariants de similitude.
\\
\end{abstract}

\selectlanguage{english}

\begin{abstract}

In this note, we give a complete characterization of square matrices of size $n$ over a commutative field which can be expressed as a product of a nilpotent matrix and an involutive matrix and matrices which can be expressed as a product of a nilpotent matrix and an idempotent matrix, in terms of invariant factors.
\\
\end{abstract}
\thispagestyle{empty}

\noindent \textbf{\underline{Keywords :}} nilpotent matrix, involutive matrix, idempotent matrix, invariant factor

\section{Introduction}

In this note, all fields considered are commutative. Let $\mathbb{K}$ be an arbitrary field. $0_{k,l}$ denotes the zero matrix of $M_{k,l}(\mathbb{K})$ and $I_{n}$ the identity matrix of size $n$. Let $A \in M_{n}(\mathbb{K})$. We denote $\chi_{A}(X):={\rm det}(XI_{n}-A)$ the characteristic polynomial of $A$ (with this definition $\chi_{A}(X)$ is a monic polynomial), $\pi_{A}$ the minimal polynomial of $A$, assumed to be monic. Let $B \in M_{k,l}(\mathbb{K})$, ${}^t B$ is the transpose of $B$. Let $n \geq 1$ and $P(X):=X^{n}+\sum_{i=0}^{n-1} a_{i}X^{i} \in \mathbb{K}[X]$. The companion matrix of $P$ is the matrix $C(P)$ defined as follows:
$C(P):=\begin{pmatrix}
   0 & 0 & \ldots & 0 & -a_{0}  \\
      1 & 0 & \ldots & 0 & -a_{1}   \\
		  0 & \ddots & \ddots & \vdots & \vdots  \\
		  \vdots & \ddots & \ddots & 0 & -a_{n-2}  \\
		  0 & \ldots & 0 & 1 &  -a_{n-1} \\
   \end{pmatrix}$. Let $P$ and $Q$ be two polynomials over $\mathbb{K}$. We will write $P \mid Q$ if $P$ divides $Q$. Let ${\rm deg}(P)$ be the degree of $P$ and ${\rm pgcd}(P,Q)$ be the greatest common divisor of $P$ and $Q$. For all real numbers $x$, $E[x]$ is the integer part of $x$ and $E^{s}[x]$ the ceiling of $x$.
\\
\\
\\\indent Decomposition of square matrices into products of two matrices satisfying remarkable properties is a wide subject of research in linear algebra. Among these properties, one often considers those which verify simple and elegant multiplicative equalities. In particular, many results have been obtained for the most famous of them : nilpotent matrices, involutive matrices and idempotent matrices. More precisely, D. Ž. Djoković has proved that a matrix $A$ is the product of two involutions, that is to say $A=UV$ with $U^{2}=V^{2}=I_{n}$, if and only if $A$ is similar to $A^{-1}$ (see \cite{D}). C. S. Ballantine has characterized the matrices which can be expressed as a product of $k$ idempotents (see \cite{B}), that is to say $A=A_{1}\ldots A_{k}$ with $A_{j}^{2}=A_{j}$, and numerous results about nilpotent factorizations are known (see for instance \cite{Bo,S}).
\\
\\\indent These results all concern decompositions involving matrices satisfying the same property. Hence, a question arises naturally: Can we characterize the matrices that can be represented as a product of two matrices satisfying distinct classical multiplicative properties ? Products of matrices that verify different types of identities have already been studied in certain cases involving invertible matrices (see for instance \cite{SP1,SP2}) but there is still much to be done. In particular, there are few results concerning non-invertible matrices, especially for the matrices cited above (note that the products of a nilpotent and a unipotent matrix have already been studied, see \cite{M}).
\\
\\\indent In this text, we will consider the two following cases: the product of a nilpotent matrix and an involutive matrix and the product of a nilpotent matrix and an idempotent matrix. Note that several results concerning the sum of a nilpotent and an idempotent matrix, called nil-clean matrix, are known (see for instance \cite{Br}). Moreover, numerous elements about products of nilpotent and idempotent elements in a ring have been shown (see for example \cite{CP,Z}).
\\
\\\indent More precisely, we will prove, in section \ref{ninv} and in section \ref{nidem}, the two decomposition results presented below.
	
\begin{thm}
\label{11}

Let $n$ be a positive integer and $\mathbb{K}$ be a commutative field. A matrix $A \in M_{n}(\mathbb{K})$ can be written as a product of a nilpotent matrix and an involutive matrix if and only if ${\rm det}(A)=0$ and $A$ has at most $E\left[\frac{n}{2}\right]$ invariant factors different from both 1 and $X^{j}$, with $1 \leq j \leq n$.

\end{thm}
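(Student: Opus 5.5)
We argue with $A=NU$, $N$ nilpotent and $U^{2}=I_{n}$; equivalently $AU=N$, since $U=U^{-1}$. So $A$ is such a product if and only if $AU$ is nilpotent for some involution $U$. The condition in Theorem \ref{11} depends only on the similarity class of $A$, because the invariant factors do. The property of being a product is also similarity invariant: $PAP^{-1}=(PNP^{-1})(PUP^{-1})$. Hence we may assume $A$ is in Frobenius normal form $C(P_{1})\oplus\cdots\oplus C(P_{r})$, where $P_{1}\mid\cdots\mid P_{r}$ are the invariant factors different from $1$.

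\emph{Necessity.} If $A=NU$, then ${\rm det}(A)={\rm det}(N){\rm det}(U)=0$. For the count, let $U$ have eigenspaces $E_{+}$ and $E_{-}$. In characteristic $\neq 2$ we have $\mathbb{K}^{n}=E_{+}\oplus E_{-}$, and one of them, say $F$, has dimension $\geq E^{s}[n/2]$. On $F$ the matrix $U$ acts as $\pm{\rm id}$, so $A$ and $\pm N$ agree on $F$. I would then translate "$A$ agrees with a nilpotent on a subspace of dimension $\geq n-E[n/2]$" into the invariant-factor bound. The intended route is a rank count. Let $s$ be the number of invariant factors not of the form $X^{j}$. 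The key claim is that for every nonzero $\lambda$-eigenvalue data the number of blocks forced is $\le$ codimension of $F$. More concretely, I would take a polynomial $Q$ coprime to $X$ dividing the non-monomial parts, and use that $Q(A)$ restricted to the $X$-primary-free part has rank related to $s$. The cleaner tool is this: for a field extension eigenvalue $\mu\neq 0$, $\dim\ker(A-\mu)$ over $\overline{\mathbb{K}}$ must be $\le n-\dim F$, because $\ker(A-\mu)\cap F\subset\ker(\pm N-\mu)=0$. This bounds the number of invariant factors divisible by each irreducible $p\neq X$ by $E[n/2]$. Since the invariant factors divisible by some $p\neq X$ form a chain under divisibility, all of them are divisible by the irreducible factors of the smallest one, $P_{k}$. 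Applying the bound to one irreducible $p\neq X$ dividing $P_{k}$ then yields $s\le E[n/2]$. In characteristic $2$ the eigenspace decomposition fails. There I would instead use $F=\ker(U-I_{n})$, which has dimension $\geq n/2$ because ${\rm rank}(U-I_{n})\le n/2$, since $(U-I_{n})^{2}=0$. The same argument applies.

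\emph{Sufficiency.} Pair each invariant factor $P$ not of the form $X^{j}$ with something of dimension $\geq {\rm deg} P$ coming from blocks that are nilpotent or from $X$-factors. Since ${\rm det}(A)=0$, the largest invariant factor is divisible by $X$. I would build explicit block constructions, and the basic lemma needed is of the following kind. Let $C(P)$ be a companion matrix with $X\mid P$, or let $C(P)\oplus C(X^{j})$ with $j\ge 1$ be given. These must have nilpotent products with suitable involutions, for example $U$ a permutation-like matrix reversing the basis. Then one shows that a direct sum of companion blocks, each with nonzero constant term, can be absorbed provided that the total number of such "bad" blocks is at most $E[n/2]$. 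The practical plan has two steps. First, for a single companion $C(P)$ with $P(0)=0$, exhibit $U$ with $C(P)U$ nilpotent; the natural candidate is the anti-diagonal reversal, which makes $C(P)U$ triangular with zero diagonal after adjusting the last column. Second, handle the invertible part, which has at most $E[n/2]$ blocks, by borrowing one dimension from elsewhere per block. For this I would use a realization lemma stating that any matrix similar to $\begin{pmatrix}B&*\\ *&0\end{pmatrix}$ has the needed form, and choose a cyclic basis adapted to $A$ as a whole rather than block by block.

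The main obstacle is this sufficiency construction. The hypothesis counts invariant factors, not dimensions, so we need a basis in which $A$ takes a form making $AU$ strictly triangular. My first attempt would be to find a basis $e_{1},\dots,e_{n}$ and an involution swapping $e_{i}$ and $e_{n+1-i}$ such that $A$ becomes upper triangular with zeros placed appropriately after the swap. This is done by relabelling a cyclic decomposition with interleaved vectors, which is where the bound $E[n/2]$ enters: at most half the basis vectors can serve as "invertible" generators.
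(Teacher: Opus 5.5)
\paragraph{Necessity.} Your argument for this direction is correct, and it takes a different, more elementary route than the paper. The paper combines the inequality $i^{*}(AB)\ge i^{*}(A)+i^{*}(B)-n$ (Theorem \ref{25}) with $i^{*}(U)\ge E^{s}[n/2]$ (Lemmas \ref{28}--\ref{29}). You instead take a $U$-eigenspace $F$ with $\dim F\ge E^{s}[n/2]$, using $(U-I_{n})^{2}=0$ in characteristic $2$. Since $A=\pm N$ on $F$, you get $\ker(A-\mu I_{n})\cap F=0$ over $\overline{\mathbb{K}}$ for every $\mu\neq 0$. You then use that the invariant factors with a nonzero root are exactly those vanishing at a fixed nonzero root $\mu$ of the first of them. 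This is a self-contained proof of the special case of Theorem \ref{25} that is actually needed.

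\paragraph{Sufficiency: this is where the gap is.} The $\Leftarrow$ direction is not proved; it is a list of intentions. The ``realization lemma'' is never formulated, and the ``cyclic basis adapted to $A$ as a whole'' with ``interleaved vectors'' is never constructed. Your only concrete step is a single block $C(P)$ with $P(0)=0$, and that merely recovers Proposition \ref{210}. The bare reversal does not even achieve that: already $C(X^{2})J=\mathrm{diag}(0,1)$. The paper's $U$ is the identity with last column replaced by $(-a_{1},\dots,-a_{k-1},-1)$.

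All the difficulty lies in the case where the first nontrivial invariant factor does not vanish at $0$. Then no nontrivial invariant factor is a power of $X$, every block is ``bad'', and the involution must mix blocks. Your guiding heuristic does not work there, at least if the extra dimensions are to come from the nilpotent part. The heuristic was to pair each bad factor $P$ with at least $\deg P$ dimensions from nilpotent blocks or $X$-factors, or to borrow one such dimension per invertible block.

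A counterexample is $A=C((X-1)^{2})\oplus C(X(X-1)^{2})$ with $n=5$.
\begin{itemize}
\item It satisfies the hypotheses, with $i^{*}(A)=2$.
\item Its generalized kernel is one-dimensional, while its invertible part has two blocks.
\item Yet $A$ is such a product. It is similar to $(0)\oplus\begin{pmatrix}0&S\\-S&2I_{2}\end{pmatrix}$ with $S=\begin{pmatrix}0&1\\1&0\end{pmatrix}$. Right-multiplying this matrix by the reversal $J_{5}$ gives a strictly lower triangular matrix.
\end{itemize}
So the mechanism that makes $E[n/2]$ the right bound is not the one you describe.

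\paragraph{How the paper closes it.} The paper does not construct the decomposition by hand either. For $n\ge 3$ it sets $r=i^{*}(A)$ and $U=C(X-1)^{\oplus(n-2r)}\oplus C(X^{2}-1)^{\oplus r}$, an involution with $i^{*}(U)=n-r$. It then applies the external existence result Theorem \ref{26} to obtain $A'\sim A$ and $U'\sim U$ with $A'U'$ nilpotent, so that $A'=(A'U')U'$. The cases $n\le 2$ are handled directly by Lemma \ref{27}. To complete your proof you must either invoke such a theorem or carry out a genuine general construction of a basis in which $AU$ is strictly triangular for a suitable involution $U$. As written, sufficiency is missing.
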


The proof will be given in the next section, using a theorem from \cite{MZ}. In this same section, we will also recall some results concerning Frobenius normal forms that will be useful to us throughout the remainder of the text.

\begin{thm}
\label{12}

Let $n$ be a positive integer and $\mathbb{K}$ be a commutative field. A matrix $A \in M_{n}(\mathbb{K})$ can be written as a product of a nilpotent matrix and an idempotent matrix if and only if the first nontrivial invariant factor of $A$ has 0 as a root.

\end{thm}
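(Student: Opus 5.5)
The plan is to reduce everything to the Frobenius normal form. First we rephrase the condition. Let $P_{1}\mid P_{2}\mid \cdots \mid P_{k}$ be the nontrivial invariant factors of $A$. Since $P_{1}$ divides every $P_{i}$, the condition ``$P_{1}(0)=0$'' is equivalent to ``every nontrivial invariant factor vanishes at $0$''. Now $\dim\ker A$ is the number of invariant factors divisible by $X$. Also, over an algebraic closure $\overline{\mathbb{K}}$ (which does not change the invariant factors), $\dim\ker(A-\lambda I_{n})$ is the number of $P_{i}$ vanishing at $\lambda$, and $k=\max_{\lambda\in\overline{\mathbb{K}}}\dim\ker(A-\lambda I_{n})$. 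So the condition is equivalent to $\dim\ker(A-\lambda I_{n})\leq \dim\ker A$ for all $\lambda\in\overline{\mathbb{K}}$, and also implies ${\rm det}(A)=0$. Both directions will use this reformulation.

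\emph{Necessity.} Suppose $A=NE$ with $N$ nilpotent and $E$ idempotent of rank $r$. Since the property is invariant under simultaneous conjugation, we may take $E={\rm diag}(I_{r},0)$ and write $N$ in blocks $N_{11},N_{12},N_{21},N_{22}$. Then $A$ has first block column $\begin{pmatrix} N_{11}\\ N_{21}\end{pmatrix}$ and zero second block column, so $\dim\ker A\geq n-r$. For $\lambda\neq 0$, a vector $(x,y)$ lies in $\ker(A-\lambda I_{n})$ if and only if $N_{11}x=\lambda x$ and $y=\lambda^{-1}N_{21}x$. Hence $\dim\ker(A-\lambda I_{n})=\dim\ker(N_{11}-\lambda I_{r})$. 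Next, $N_{11}-\lambda I_{r}$ is an $r\times r$ submatrix of the invertible matrix $N-\lambda I_{n}$ (invertible because $N$ is nilpotent and $\lambda\neq0$). Its first $r$ rows have rank $r$, and deleting $n-r$ columns lowers the rank by at most $n-r$. Therefore $\dim\ker(N_{11}-\lambda I_{r})\leq n-r\leq \dim\ker A$. By the reformulation above, $P_{1}(0)=0$. This works verbatim over $\overline{\mathbb{K}}$.

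\emph{Sufficiency.} By the Frobenius normal form, $A$ is similar to ${\rm diag}(C(P_{1}),\dots,C(P_{k}),0)$, where the zero block comes from trivial factors; I will check that no such block actually occurs beyond size $0$. Each $P_{i}(0)=0$, and a block-diagonal product of nilpotent$\times$idempotent decompositions is again one. So it suffices to decompose a single companion matrix $C(P)$ with $P(0)=0$. Write $C(P)=S+c\,{}^t e_{n}$, where:
\begin{itemize}
\item $S$ is the nilpotent shift with $Se_{j}=e_{j+1}$ and $Se_{n}=0$;
\item $c=-{}^t(a_{0},\dots,a_{n-1})$, so that $c_{1}=-a_{0}=0$.
\end{itemize}
Set $E=I_{n}-u\,{}^t e_{n}$ with $u_{n}=1$ and $u_{i}=-c_{i+1}$ for $1\leq i\leq n-1$. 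Since ${}^t e_{n}u=1$, $E$ is idempotent. Moreover $Su={}^t(0,u_{1},\dots,u_{n-1})=-c$ because $c_{1}=0$. Hence $SE=S-Su\,{}^t e_{n}=S+c\,{}^t e_{n}=C(P)$. Conjugating back gives $A=NE$.

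The only delicate point I foresee is the rank estimate in the necessity part, namely that an $r\times r$ corner of an invertible $n\times n$ matrix has nullity at most $n-r$. The construction in the sufficiency part is explicit.
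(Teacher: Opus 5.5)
Your proof is correct. The sufficiency half is the paper's argument: your idempotent $I-u\,{}^te_n$ with $u={}^t(a_1,\dots,a_{n-1},1)$ is exactly the matrix $V$ used there, and $S$ is the same nilpotent shift.

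The necessity half takes a genuinely different route. The paper also conjugates the idempotent to a diagonal $0/1$ matrix. It then computes the determinantal divisors $\Delta_l$ of $XI_n-M$ separately in three ranges of $l$. In the middle range it needs Thompson's theorem on invariant factors of principal submatrices ($Q_u\mid P_{u+2k}$, applied to the nilpotent $N$) to show that $\Delta_l$ is a power of $X$.

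You instead reformulate the condition as ``$0$ has maximal geometric multiplicity among all eigenvalues in $\overline{\mathbb{K}}$''. This is valid because $\dim\ker(A-\lambda I_n)$ counts the invariant factors vanishing at $\lambda$, and every nontrivial factor vanishes at any root of $P_1$. The required inequality then follows from an elementary fact: an $r\times r$ principal submatrix of the invertible matrix $N-\lambda I_n$ has nullity at most $n-r$.

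Your argument is shorter, self-contained, and avoids Thompson's theorem entirely. It also explains conceptually where the condition comes from. The price is a passage to $\overline{\mathbb{K}}$. This is harmless, since invariant factors are unchanged under field extension (e.g. by Theorem \ref{23}), whereas the paper's computation stays over $\mathbb{K}$ and works with the invariant factors directly.

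One cosmetic point: trivial invariant factors contribute empty blocks $C(1)=()$, not a zero block. A zero block of size $m$ would correspond to $m$ factors equal to $X$. So the parenthetical remark about a zero block in your sufficiency part should simply be dropped rather than ``checked''.
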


\noindent This theorem will be proved in the last section.

\section{Product of a nilpotent and an involutive matrix}
\label{ninv}

The aim of this section is to prove Theorem \ref{11} and to give some elements about the decomposition studied in this result.

\subsection{Frobenius normal form} 

To fix the notation, we state here the results on the Frobenius decomposition that will be needed in the sequel (for the proofs see for instance \cite{Se} Chapter 6).

\begin{thm}[Theorem of Frobenius]
\label{21}

Let $n$ be a positive integer and $A \in M_{n}(\mathbb{K})$. There exists a unique $n$-tuple of monic polynomials $(P_{1},\ldots,P_{n})$ such that $P_{1} \mid P_{2} \mid \ldots \mid P_{n-1} \mid P_{n}$ and $A$ is similar to the block diagonal matrix $C:=\begin{pmatrix}
   C(P_{1}) &  &  \\
       & \ddots &    \\
		   &   & C(P_{n})
   \end{pmatrix}$, with the convention $C(1):=()$. The polynomials $P_{1},\ldots,P_{n}$ are the invariant factors of $A$ and $C$ is the Frobenius normal form of $A$. In particular, $P_{1}\ldots P_{n}=\chi_{A}$ and $P_{n}=\pi_{A}$.

\end{thm}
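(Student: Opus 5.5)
The statement to prove is Theorem \ref{21}, the Frobenius normal form. The plan is to use the structure theory of finitely generated modules over the principal ideal domain $\mathbb{K}[X]$. We make $\mathbb{K}^{n}$ into a $\mathbb{K}[X]$-module $E_{A}$ by letting $X$ act as $A$. Two matrices are then similar if and only if the corresponding modules are isomorphic.

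\textbf{Existence.} We present $E_{A}$ by the characteristic matrix. The map $\mathbb{K}[X]^{n}\to E_{A}$ sending $e_{i}$ to the $i$-th basis vector is surjective, and its kernel is the image of $XI_{n}-A$. This is the standard exact sequence $\mathbb{K}[X]^{n}\xrightarrow{XI_{n}-A}\mathbb{K}[X]^{n}\to E_{A}\to 0$. To check that the kernel is exactly this image, we reduce any element modulo the columns of $XI_{n}-A$ to a constant vector, which must then vanish.

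Next we bring $XI_{n}-A$ to Smith normal form $\mathrm{diag}(P_{1},\ldots,P_{n})$ by elementary row and column operations over $\mathbb{K}[X]$. We normalise each $P_{i}$ to be monic and arrange $P_{1}\mid\cdots\mid P_{n}$. This reduction is proved by induction on $n$, using Euclidean division to make the $(1,1)$ entry of minimal degree and divide all other entries.

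It follows that $E_{A}\cong\bigoplus_{i}\mathbb{K}[X]/(P_{i})$. None of the $P_{i}$ is zero, since $\det(XI_{n}-A)=\chi_{A}\neq0$, and $\prod P_{i}=\chi_{A}$ up to a unit, hence exactly because both are monic. In the cyclic module $\mathbb{K}[X]/(P)$, the basis $1,X,\ldots,X^{\deg P-1}$ gives precisely the matrix $C(P)$ for multiplication by $X$, while factors with $P_{i}=1$ contribute nothing. This yields the similarity of $A$ with $C$.

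Finally, the minimal polynomial of $C$ is $\mathrm{lcm}(P_{i})=P_{n}$, because each $C(P)$ has minimal polynomial $P$ (the vector $e_{1}$ is cyclic).

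\textbf{Uniqueness.} This is the main obstacle. I would use determinantal divisors: let $D_{k}$ be the monic gcd of the $k\times k$ minors of $XI_{n}-A$. This gcd is invariant under multiplication by invertible polynomial matrices, by Cauchy–Binet, so $D_{k}=P_{1}\cdots P_{k}$ on the Smith form. Consequently the $P_{k}=D_{k}/D_{k-1}$ are determined by $A$.

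It remains to show that similar matrices give equivalent characteristic matrices, which is immediate since $X I_{n}-QAQ^{-1}=Q(XI_{n}-A)Q^{-1}$. We also need that the characteristic matrix of a block Frobenius form $C$ is equivalent to $\mathrm{diag}(P_{1},\ldots,P_{n})$. For this it suffices to show $XI-C(P)\sim\mathrm{diag}(1,\ldots,1,P)$. That follows by computing the minors directly: the $(\deg P-1)$-minor obtained by deleting the first row and last column equals $\pm1$.

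Hence any tuple $(P_{i})$ realising a Frobenius form of $A$ coincides with the determinantal quotients, which gives uniqueness.
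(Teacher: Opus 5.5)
The paper gives no proof of this statement: it is recalled as background, with a reference to \cite{Se}, Chapter 6. Your argument is a correct and essentially complete proof. The one step you leave tacit is $\det(XI-C(P))=P$. You need it alongside the $\pm1$ minor to conclude $XI-C(P)\sim\mathrm{diag}(1,\ldots,1,P)$, and it follows from your cyclic-vector remark plus Cayley--Hamilton, or from a direct expansion along the last column.

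Your route differs from the classical matrix-only treatment found in textbooks such as the cited one in how you get back from the Smith form to a similarity. In the classical treatment one proves that $A$ and $B$ are similar if and only if $XI_n-A$ and $XI_n-B$ are equivalent over $\mathbb{K}[X]$; the hard direction uses Euclidean division of polynomial matrices by $XI_n-A$. Existence of the Frobenius form then follows because, by exactly your computation on companion blocks, $XI_n-C$ and $XI_n-A$ have the same Smith form. Your presentation $\mathbb{K}[X]^{n}\xrightarrow{XI_{n}-A}\mathbb{K}[X]^{n}\to E_{A}\to 0$ makes that lemma unnecessary for existence. The cyclic decomposition of the cokernel directly produces a basis in which $A$ becomes $C$, at the small cost of checking that the kernel is the column space of $XI_n-A$. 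Your uniqueness half is the standard determinantal-divisor argument. As a by-product it proves the paper's Theorem \ref{23}, and together with existence it gives Proposition \ref{22}; the paper also only quotes both of these.
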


\begin{prop}
\label{22}

Two matrices are similar if and only if they have the same invariant factors.

\end{prop}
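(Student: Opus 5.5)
Proposition \ref{22} is the standard companion of Theorem \ref{21}, and I would deduce it directly from the uniqueness part of that theorem.

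\textbf{Sufficiency.} Suppose $A$ and $B$ have the same invariant factors $(P_{1},\ldots,P_{n})$. By Theorem \ref{21}, each of $A$ and $B$ is similar to the block diagonal matrix $C$ built from $C(P_{1}),\ldots,C(P_{n})$. Similarity is an equivalence relation, so $A$ is similar to $B$: if $A=QCQ^{-1}$ and $B=RCR^{-1}$, then $A=(QR^{-1})B(QR^{-1})^{-1}$.

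\textbf{Necessity.} Suppose $B=SAS^{-1}$, and let $(P_{1},\ldots,P_{n})$ be the invariant factors of $A$. Then $A=QCQ^{-1}$ with $C$ the corresponding Frobenius normal form, so
\[
B=(SQ)\,C\,(SQ)^{-1}.
\]
Hence $B$ is similar to the block diagonal matrix attached to the tuple $(P_{1},\ldots,P_{n})$. This tuple consists of monic polynomials and satisfies the divisibility chain $P_{1}\mid\cdots\mid P_{n}$. By the uniqueness assertion of Theorem \ref{21}, it is therefore the tuple of invariant factors of $B$.

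There is no real obstacle here: the whole content lies in the uniqueness statement of Theorem \ref{21}, which the paper takes from \cite{Se}. If one wanted a self-contained proof of that uniqueness, the hardest step would be showing that the invariant factors are similarity invariants. I would do this by identifying them with the invariant factors of the $\mathbb{K}[X]$-module structure defined by $A$, equivalently with the Smith normal form of $XI_{n}-A$. They are then computed from the determinantal divisors $d_{k}$, namely the gcd of the $k\times k$ minors of $XI_{n}-A$, through $P_{n-k+1}=d_{k}/d_{k-1}$. Since $XI_{n}-SAS^{-1}=S(XI_{n}-A)S^{-1}$, and multiplication by invertible matrices preserves the gcd of the $k\times k$ minors, these $d_{k}$ are invariant under similarity.
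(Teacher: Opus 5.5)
Your argument is correct and is the standard one. The paper gives no proof of its own and simply cites this as part of Frobenius theory; deriving both directions from the uniqueness clause of Theorem \ref{21}, together with transitivity of similarity, is exactly how it follows from that theorem as stated. One small slip, in your optional remark only: with the convention $P_{1}\mid\cdots\mid P_{n}$ that you use, Theorem \ref{23} gives $P_{k}=d_{k}/d_{k-1}$, not $P_{n-k+1}=d_{k}/d_{k-1}$ (for instance, for $C(X^{2})$ one has $d_{1}=1$ and $d_{2}=X^{2}$), but this does not affect your invariance argument.
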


The invariant factors of a given matrix over a commutative field can be determined using the result below.

\begin{thm}
\label{23}

Let $n$ be a positive integer and $A \in M_{n}(\mathbb{K})$. Let $P_{1},\ldots,P_{n}$ be the invariant factors of $A$. Let $1 \leq k \leq n$, $\Delta_{0}:=1$ and $\Delta_{k}$ be the greatest common divisor (assumed to be monic) of all $k \times k$ minors of the matrix $XI_{n}-A$. We have:
\[\Delta_{k}=P_{1}\ldots P_{k}.\]

\noindent In particular, $P_{k}=\frac{\Delta_{k}}{\Delta_{k-1}}$.

\end{thm}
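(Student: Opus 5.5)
We prove Theorem \ref{23} by reducing to the Frobenius normal form and then computing the determinantal divisors of a block diagonal matrix of companion matrices.

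\textbf{Step 1: invariance of $\Delta_k$ under similarity.} Let $B=Q^{-1}AQ$. Then $XI_{n}-B=Q^{-1}(XI_{n}-A)Q$. By the Cauchy--Binet formula, every $k\times k$ minor of a product $MN$ is a $\mathbb{K}$-linear combination of $k\times k$ minors of $M$, and also of $k\times k$ minors of $N$. Here $Q$ and $Q^{-1}$ have entries in $\mathbb{K}$, so each $k\times k$ minor of $XI_{n}-B$ is a $\mathbb{K}$-linear combination of $k\times k$ minors of $XI_{n}-A$. Applying the same argument with $A$ and $B$ exchanged gives the converse. Hence the two families of minors generate the same ideal of $\mathbb{K}[X]$, and their monic gcds coincide. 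By Theorem \ref{21}, we may therefore assume $A=C$ is the Frobenius normal form with blocks $C(P_{1}),\ldots,C(P_{n})$.

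\textbf{Step 2: a single companion block.} Let $P$ have degree $d\geq1$. Deleting the first row and the last column of $XI_{d}-C(P)$ leaves a lower triangular matrix with diagonal entries $-1$. So $XI_{d}-C(P)$ has a minor equal to $\pm1$ of each size $j<d$, and its determinant equals $P$. Using elementary row and column operations over $\mathbb{K}[X]$, we will then diagonalize $XI_{d}-C(P)$ into ${\rm diag}(1,\ldots,1,P)$. Concretely, we use the $-1$ entries to clear their rows and columns, adding multiples of rows and columns. The same Cauchy--Binet argument as in Step 1, applied with unimodular matrices over $\mathbb{K}[X]$ (whose inverses also have polynomial entries), shows that such operations preserve every $\Delta_k$.

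\textbf{Step 3: the block diagonal matrix.} After Step 2, $XI_{n}-C$ is equivalent over $\mathbb{K}[X]$ to $D={\rm diag}(1,\ldots,1,P_{1},\ldots,P_{n})$, where the total number of diagonal entries is $n$. A $k\times k$ minor of a diagonal matrix is either $0$ or a product of $k$ diagonal entries taken from distinct positions. The entries of $D$, listed in increasing order, are $1,\ldots,1,P_{1},\ldots,P_{n}$. Since $P_{1}\mid P_{2}\mid\cdots\mid P_{n}$, every product of $k$ entries from distinct positions is divisible by the product of the $k$ smallest ones in this divisibility order, and that product is itself one of the minors. The $k$ smallest entries are the $1$'s followed by the first few nontrivial $P_i$'s. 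Because of the convention $C(1)=()$, the trivial factors $P_{i}=1$ contribute no $1$ entries to $D$. Counting how many $1$'s the nontrivial blocks contribute shows that the gcd of the $k\times k$ minors is exactly $P_{1}\cdots P_{k}$, which proves $\Delta_{k}=P_{1}\ldots P_{k}$. The formula $P_{k}=\Delta_{k}/\Delta_{k-1}$ follows immediately, with $\Delta_{0}=1$.

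\textbf{Main obstacle.} The delicate part is the bookkeeping in Step 3: matching the number of $1$ entries in $D$ with the number of trivial invariant factors. If $r$ of the $P_i$ are trivial, the nontrivial blocks have total size $n$ and contribute exactly $r$ entries equal to $1$. These $r$ ones sit in place of the $r$ trivial factors $P_1=\cdots=P_r=1$, so for every $k$ the product of the $k$ smallest entries of $D$ is $P_{1}\cdots P_{k}$.
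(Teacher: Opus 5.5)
Your proof is correct. The paper gives no proof of this statement: it quotes it as a standard fact and refers to \cite{Se}, Chapter 6, where it comes out of the Smith normal form theory over the principal ideal domain $\mathbb{K}[X]$.

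Your argument uses the same two ingredients as that textbook route:
\begin{itemize}
\item invariance of the determinantal divisors under equivalence over $\mathbb{K}[X]$, via Cauchy--Binet with factors invertible over $\mathbb{K}[X]$;
\item the equivalence of $XI_{d}-C(P)$ with ${\rm diag}(1,\ldots,1,P)$.
\end{itemize}
The difference is that you take the existence of the Frobenius form (Theorem \ref{21}) as a black box, instead of proving that every polynomial matrix has a Smith form. This is legitimate here, since the paper states Theorem \ref{21} beforehand, and it makes your argument short and explicit. Your route also yields the uniqueness half of Theorem \ref{21} for free. Any divisibility chain $(P_{i})$ whose companion blocks give a matrix similar to $A$ must satisfy $P_{1}\cdots P_{k}=\Delta_{k}$, and $\Delta_{k}$ depends only on $A$.

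A few cosmetic points:
\begin{itemize}
\item After deleting the first row and last column of $XI_{d}-C(P)$, the remaining matrix is upper (not lower) triangular with diagonal $-1$. This changes nothing.
\item The minor computation in Step 2 becomes superfluous once you have the diagonalization.
\item Writing $D={\rm diag}(1,\ldots,1,P_{1},\ldots,P_{n})$ ``with $n$ entries'' is inconsistent as written. What you mean, and what your last paragraph establishes, is that the $r$ ones coming from the nontrivial blocks play the role of $P_{1}=\cdots=P_{r}=1$. So $D$ is ${\rm diag}(P_{1},\ldots,P_{n})$ up to a permutation of the diagonal.
\item The gcd step is cleanest stated directly. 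Non-principal minors of a diagonal matrix vanish, and for $i_{1}<\cdots<i_{k}$ one has $i_{j}\geq j$, hence $P_{j}\mid P_{i_{j}}$.
\end{itemize}
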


The following object is not classical. However, it will be very useful for all the results presented in the remaining of this section.

\begin{defn}
\label{24}

Let $n$ be a positive integer and $A \in M_{n}(\mathbb{K})$. Let $P_{1},\ldots,P_{n}$ be the invariant factors of $A$. We denote $i^{*}(A)$ the number of invariant factors of $A$ with at least one nonzero root in the algebraic closure of $\mathbb{K}$, that is to say the number of invariant factors of $A$ different from $1$ and $X^{j}$, with $1 \leq j \leq n$.

\end{defn}

\subsection{$i^{*}$ and products of matrices}

The elements given in this subsection are the key elements for the study of the decomposition \og nilpotent-involution \fg. We begin with a majoration of $i^{*}(AB)$.

\begin{thm}[\cite{MZ} Theorem 1 and \cite{Si} Theorem 4]
\label{25}

Let $n \geq 1$ and $(A,B) \in M_{n}(\mathbb{K})^{2}$. We have: 
\[i^{*}(AB) \geq {\rm max}(0,i^{*}(A)+i^{*}(B)-n).\]

\end{thm}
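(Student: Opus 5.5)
The plan is to express $i^{*}$ through eigenspaces over the algebraic closure $\overline{\mathbb{K}}$. Once that is done, the inequality reduces to a dimension count for the intersection of two eigenspaces.

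\emph{Step 1 (invariance under field extension).} By Theorem~\ref{23}, the invariant factors of $A$ are quotients of gcd's of minors of $XI_{n}-A$. The gcd of polynomials with coefficients in $\mathbb{K}$ does not change when we pass to $\overline{\mathbb{K}}$. Hence $A$ has the same invariant factors over $\mathbb{K}$ and over $\overline{\mathbb{K}}$, and $i^{*}(A)$ can be computed over $\overline{\mathbb{K}}$. From now on we work in $M_{n}(\overline{\mathbb{K}})$.

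\emph{Step 2 (key identity).} I will show that
\[ i^{*}(A)=\max_{\lambda\in\overline{\mathbb{K}}^{*}} \dim \ker(A-\lambda I_{n}), \]
with the convention that the maximum is $0$ if $A$ has no nonzero eigenvalue.
\begin{itemize}
\item By Theorem~\ref{21}, $A$ is similar to $\mathrm{diag}(C(P_{1}),\ldots,C(P_{n}))$.
\item For a companion matrix, $C(P)-\lambda I$ has rank at least $\deg P-1$, because of its subdiagonal of $1$'s. So $\dim\ker(C(P)-\lambda I)$ equals $1$ if $P(\lambda)=0$ and $0$ otherwise.
\item Therefore $\dim\ker(A-\lambda I_{n})$ is the number of invariant factors having $\lambda$ as a root. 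For $\lambda\neq 0$ this is at most $i^{*}(A)$.
\item Conversely, by the divisibility chain $P_{1}\mid\cdots\mid P_{n}$, the invariant factors with a nonzero root are exactly the last $i^{*}(A)$ ones, $P_{n-i^{*}(A)+1},\ldots,P_{n}$. If $i^{*}(A)\geq 1$, pick a nonzero root $\lambda$ of $P_{n-i^{*}(A)+1}$. Every later $P_{j}$ is divisible by it, so it also has $\lambda$ as a root, and the maximum $i^{*}(A)$ is attained.
\end{itemize}

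\emph{Step 3 (the inequality).} If $i^{*}(A)+i^{*}(B)\leq n$ there is nothing to prove, so assume otherwise. Then both $i^{*}(A)$ and $i^{*}(B)$ are positive. By Step 2, choose $\lambda,\mu\neq 0$ with $\dim\ker(A-\lambda I_{n})=i^{*}(A)$ and $\dim\ker(B-\mu I_{n})=i^{*}(B)$. Take $x\in\ker(A-\lambda I_{n})\cap\ker(B-\mu I_{n})$. Then $ABx=A(\mu x)=\lambda\mu x$, so $x\in\ker(AB-\lambda\mu I_{n})$, with $\lambda\mu\neq 0$. Grassmann's formula gives
\[ \dim\ker(AB-\lambda\mu I_{n})\geq i^{*}(A)+i^{*}(B)-n. \]
Applying Step 2 to $AB$ yields $i^{*}(AB)\geq i^{*}(A)+i^{*}(B)-n$.

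The main obstacle is Step 2, specifically the attainment of the maximum. One has to check that the divisibility chain forces a single nonzero eigenvalue to be a root of all of the last $i^{*}(A)$ invariant factors; the companion-matrix rank computation behind it is routine.
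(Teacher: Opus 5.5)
Your proof is correct, and it takes a different route from the paper: the paper gives no argument for this inequality and simply imports it from \cite{MZ} (Theorem 1) and \cite{Si} (Theorem 4), whereas yours is self-contained and elementary.

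The whole content of your argument is the reformulation $i^{*}(A)=\max_{\lambda\in\overline{\mathbb{K}}^{*}}\dim\ker(A-\lambda I_{n})$, and you justify it correctly. Each companion block $C(P)-\lambda I$ has rank at least $\deg P-1$ because of its subdiagonal of $1$'s. The divisibility chain forces a nonzero root of the first invariant factor not of the form $X^{j}$ to be a root of all later ones. Once this identity is in place, the inequality is just Grassmann's formula applied to $\ker(A-\lambda I_{n})\cap\ker(B-\mu I_{n})\subseteq\ker(AB-\lambda\mu I_{n})$.

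Your Step 1 is harmless but not really needed. The similarity of $A$ to its Frobenius form over $\mathbb{K}$ persists over $\overline{\mathbb{K}}$, so you can compute the kernels of the companion blocks there directly.

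Your approach buys a transparent, citation-free proof of the necessity part of Theorem \ref{11}. It does not touch the realization result Theorem \ref{26} (also from \cite{MZ}), which the sufficiency part still requires. That result is a genuinely harder existence statement, asserting that suitable similar copies with nilpotent product exist.
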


We also have the following useful result, related to the problem of determining similarity classes $\mathcal{A}$ and $\mathcal{B}$ such that there exist matrices $A \in \mathcal{A}$ and $B \in \mathcal{B}$ whose product $AB$ is nilpotent.

\begin{thm}[\cite{MZ} Theorem 2.2]
\label{26}

Let $n \geq 3$ and $(A,B) \in M_{n}(\mathbb{K})^{2}$ satisfying $i^{*}(A)+i^{*}(B)=n$ and ${\rm det}(A){\rm det}(B)=0$. There exist $A'$ similar to $A$ and $B'$ similar to $B$ such that $A'B'$ is nilpotent.

\end{thm}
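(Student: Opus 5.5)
We would prove Theorem \ref{26} by a flag construction. Here $A'B'$ is nilpotent if and only if there is a complete flag $0=V_{0}\subset V_{1}\subset\cdots\subset V_{n}=\mathbb{K}^{n}$ with $A'B'V_{i}\subset V_{i-1}$ for all $i$. We would look for this in a factored form: a second complete flag $(W_{i})$ such that, for each $i$, either
\[ B'V_{i}\subset W_{i-1}\ \text{and}\ A'W_{i-1}\subset V_{i-1}, \qquad \text{or} \qquad B'V_{i}\subset W_{i}\ \text{and}\ A'W_{i}\subset V_{i-1}. \]
So at each step exactly one of the two matrices \emph{drops} the flag index by one, and the other only preserves it. The hypothesis $i^{*}(A)+i^{*}(B)=n$ then says we must distribute exactly $n$ drops, $n-i^{*}(B)$ to $B'$ and $n-i^{*}(A)$ to $A'$.

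The first ingredient is a lemma on a single matrix $M$ with $i^{*}(M)=m$. For every subset $S\subset\{1,\ldots,n\}$ of cardinality $n-m$ that is compatible with the invariant factors, we want a matrix $M'$ similar to $M$ and two complete flags $(F_{i}),(G_{i})$ such that $M'F_{i}\subset G_{i-1}$ for $i\in S$ and $M'F_{i}\subset G_{i}$ otherwise. To construct it, we use the Frobenius decomposition (Theorem \ref{21}).
\begin{itemize}
\item The first $n-m$ invariant factors are $1$ or $X^{j}$, and each $C(X^{j})$ is a nilpotent shift, which supplies drops inside its own block.
\item Each of the last $m$ invariant factors $P$ with a nonzero root has $C(P)$ equal to a shift plus a last column, which supplies one drop for every basis vector except one.
\end{itemize}
Counting, the drops provided by the Frobenius form are $\sum(\text{sizes})-m=n-m$, and these can be reordered by interleaving the blocks. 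This matches the lower bound of Theorem \ref{25}, which shows that $n-m$ drops is the maximum possible.

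Next we choose compatible drop sets for $A$ and $B$. We need $S_{B}$ of size $n-i^{*}(B)=i^{*}(A)$ and $S_{A}$ of size $i^{*}(B)$ which partition $\{1,\ldots,n\}$ in the sense of the factored condition above, with the flags $(W_{i})$ shared between the two. We would order the blocks of $A$ and $B$ so that the non-dropping positions of $A$ (one per block with a nonzero root) are exactly the dropping positions of $B$, and conversely. The hypothesis ${\rm det}(A){\rm det}(B)=0$ is used here: one of the two matrices has a genuine kernel vector, that is, a factor $X^{j}$ with $j\geq1$ or a factor $P$ with $P(0)=0$. This kernel vector closes the cycle at the extreme position $i=1$ or $i=n$, where otherwise both matrices would need to drop simultaneously.

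The main obstacle is exactly this interleaving: the divisibility chain and the block sizes constrain where the non-drops of each companion block can sit. For small $n$ an adversarial configuration can appear, for instance $n=2$ with both matrices invertible-like on a line, which is presumably why $n\geq3$ is assumed. To handle it, we would first try induction on $n$. We split off a pair of blocks, one from $A$ and one from $B$, on a common invariant subspace of dimension $1$ or $2$ where the product is visibly nilpotent, and apply the induction hypothesis to the quotient with $i^{*}$ values decreased appropriately. The base cases $n=3$ and $n=4$ would be checked by hand from the possible lists of invariant factors.
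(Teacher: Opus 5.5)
The paper does not prove this statement; it quotes it from [MZ, Theorem 2.2]. Your sketch therefore has to stand on its own, and it does not: the drop bookkeeping it rests on is wrong.

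A drop $M'F_i\subset G_{i-1}$ sends an $i$-dimensional space into an $(i-1)$-dimensional one, so it measures rank deficiency, not similarity. With two unrelated flags, an invertible matrix admits no drop at all. A singular $M$, on the other hand, admits every drop set, since $M=QTP^{-1}$ with $T$ strictly upper triangular of the same rank. For instance, $M={\rm diag}(1,1,0)$, with $F$ built on $(e_3,e_1,e_2)$ and $G$ on $(e_1,e_2,e_3)$, satisfies $MF_i\subset G_{i-1}$ for $i=1,2,3$, although $n-i^{*}(M)=1$. Three consequences follow for your single-matrix lemma:
\begin{itemize}
\item it carries no information about the similarity class, because the two free flags absorb it;
\item the claimed maximum of $n-m$ drops is false;
\item for every invertible non-scalar $M$ it is false outright, since it requires $n-m\geq 1$ drops, and an invertible block $C(P)$ supplies none, not $\deg P-1$.
\end{itemize}
The invariant actually at stake is $n-i^{*}(M)=\min_{\lambda\neq 0}{\rm rank}(M-\lambda I_n)$ over $\overline{\mathbb{K}}$. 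Theorem \ref{25} follows from ${\rm rank}(AB-\lambda\mu I_n)\leq {\rm rank}(A-\lambda I_n)+{\rm rank}(B-\mu I_n)$. This quantity is not visible in drop counts.

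Here is a concrete instance. Take $\mathbb{K}=\mathbb{Q}$, $n\geq 3$, $A=C(P)$ with $P$ irreducible of degree $n$, and $B={\rm diag}(I_{n-1},0)$. Then $i^{*}(A)=1$, $i^{*}(B)=n-1$ and ${\rm det}(B)=0$, and the conclusion holds because $C(P)B$ is the nilpotent shift. Your scheme assigns $n-1$ drops to $A'$, which is impossible since $A'$ is invertible. In any admissible pair of flags all $n$ drops fall on $B'$. Then $A'W_{i-1}=V_{i-1}$, so the factored condition collapses to $A'B'V_i\subset V_{i-1}$, which is the original problem.

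Your fallback induction cannot start on this example either: a matrix similar to $A$ has no invariant subspace of dimension $1$ or $2$.

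The missing idea is the coupling you postpone as ``interleaving''. A single pair $(V,W)$ must serve both $B'\colon V\to W$ and $A'\colon W\to V$. In adapted bases this means $A\sim T_AR$ and $B\sim R^{-1}T_B$ for one common invertible $R$, with $T_A,T_B$ upper triangular and $(T_A)_{ii}(T_B)_{ii}=0$ for all $i$. Producing such an $R$ from the invariant factors is the whole content of the theorem, and nothing in the proposal does it.

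Incidentally, every instance with $n=2$ is easily checked to hold, so the hypothesis $n\geq 3$ is not explained by an obstruction of the kind you suggest.
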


Note that the original theorem contains other cases. Here, we state only the case which will be useful in the next subpart.

\subsection{Proof of the decomposition result}

The aim of this subsection is to prove our first theorem. We begin by three easy lemmas.

\begin{lem}
\label{27}

Let $A \in M_{2}(\mathbb{K})$ satisfying ${\rm det}(A)=0$. The matrix $A$ can be written as a product of a nilpotent and an involutive matrix.

\end{lem}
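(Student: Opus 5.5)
We prove Lemma \ref{27} by reducing to Frobenius normal forms and treating the few possible similarity classes of a singular $2 \times 2$ matrix by hand. The statement is invariant under similarity: if $A=NS$ with $N$ nilpotent and $S^{2}=I_{2}$, then for any invertible $P$ we have $PAP^{-1}=(PNP^{-1})(PSP^{-1})$, and the two factors are again nilpotent and involutive. By Theorem \ref{21} and Proposition \ref{22}, it therefore suffices to treat one representative of each similarity class of singular matrices of size $2$.

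Since ${\rm det}(A)=0$, the characteristic polynomial is $\chi_{A}=X(X-a)$ with $a={\rm tr}(A)\in\mathbb{K}$. The invariant factors are then either $(1,X(X-a))$, or $(X,X)$, the latter forcing $a=0$ and $A=0_{2,2}$. We handle the cases as follows.

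\begin{itemize}
\item[(i)] If $A=0_{2,2}$, write $A=0_{2,2}\cdot I_{2}$.
\item[(ii)] If the invariant factors are $(1,X(X-a))$, then $A$ is similar to the companion matrix
\[C(X^{2}-aX)=\begin{pmatrix} 0 & 0 \\ 1 & a \end{pmatrix}.\]
This also covers the nilpotent nonzero case $a=0$.
\end{itemize}

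For case (ii), take the nilpotent matrix $N=\begin{pmatrix} 0 & 0 \\ 1 & 0\end{pmatrix}$ and look for an involution $S$ with $NS=C(X^{2}-aX)$. Since $NS$ equals the matrix whose first row is zero and whose second row is the first row of $S$, we need the first row of $S$ to be $(1,a)$. The choice
\[S=\begin{pmatrix} 1 & a \\ 0 & -1\end{pmatrix}\]
works, because $S^{2}=\begin{pmatrix} 1 & a-a \\ 0 & 1\end{pmatrix}=I_{2}$. This identity holds in every characteristic, including characteristic $2$, where $S$ becomes $\begin{pmatrix} 1 & a \\ 0 & 1\end{pmatrix}$ and is still involutive.

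The only step requiring care is finding an involution whose first row is the prescribed vector $(1,a)$ and that does not depend on the characteristic; the upper triangular choice $S$ above settles this. Everything else is bookkeeping on similarity classes.
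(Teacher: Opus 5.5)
Your proof is correct. It differs from the paper's in how the normal form is reached and in which involution is used, though both factor through $N=\begin{pmatrix} 0 & 0 \\ 1 & 0 \end{pmatrix}$.

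The paper reduces $A$ by hand. It takes a nonzero kernel vector, completes it to a basis, and uses similarity with the transpose to reach $\begin{pmatrix} 0 & 0 \\ a & b \end{pmatrix}$, where $a$ is not normalized. It then needs an involution with prescribed first row $(a,b)$, and chooses $\begin{pmatrix} a & b \\ (1-a^{2})b^{-1} & -a \end{pmatrix}$. This choice forces a separate case $b=0$, where $A$ is nilpotent and $A=A\cdot I_{2}$. The split is genuinely needed with this $N$, since an involution with first row $(a,0)$ exists only when $a^{2}=1$.

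By passing to the Frobenius normal form, you normalize that entry to $1$. The prescribed first row becomes $(1,a)$, and the upper triangular involution $\begin{pmatrix} 1 & a \\ 0 & -1 \end{pmatrix}$ works with no division and no case split on the trace. The only separate case is $A=0_{2,2}$, which you correctly identify as the invariant factors $(X,X)$.

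Your argument is exactly the case $k=2$ of the constructive second proof of Proposition \ref{210}. It therefore shows that Lemma \ref{27} is a special case of that proposition, since every nontrivial invariant factor of a singular $2\times 2$ matrix vanishes at $0$. In exchange, the paper's version avoids the rational canonical form and uses only a kernel vector and the similarity of a matrix with its transpose.
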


\begin{proof}

Let $u$ be the endomorphism of $\mathbb{K}^{2}$ whose matrix in the canonical basis is $A$. Since ${\rm det}(A)=0$, there exists $x \in \mathbb{K}^{2}$ such that $x \neq 0$ and $u(x)=0$. We complete the family $(x)$ in a basis $(x,y)$ of $\mathbb{K}^{2}$. The matrrix of $u$ in this basis is $B:=\begin{pmatrix}
   0 & a \\
   0 & b   \\
   \end{pmatrix}$. The matrix $A$ is similar to $B$ and to the transpose of $B$. Hence, there exists $P \in GL_{2}(\mathbb{K})$ such that $PAP^{-1}=\begin{pmatrix}
   0 & 0 \\
   a & b   \\
   \end{pmatrix}$. If $b=0$ then $A$ is nilpotent, $A=A \times I_{2}$ and $I_{2}$ is idempotent. If $b \neq 0$, we have the following equality: \[\begin{pmatrix}
   0 & 0 \\
   a & b  
   \end{pmatrix}=\underbrace{\begin{pmatrix}
   0 & 0 \\
   1 & 0  
   \end{pmatrix}}_{N}\underbrace{\begin{pmatrix}
   a & b \\
   (1-a^{2})b^{-1} & -a 
   \end{pmatrix}}_{U}.\]
\noindent Hence, $A=(P^{-1}NP)(P^{-1}UP)$, $P^{-1}NP$ is nilpotent and $P^{-1}UP$ is involutive, since $U^{2}=I_{2}$.

\end{proof}

\begin{lem}
\label{28}

Let $\mathbb{K}$ be a field of odd characteristic and $U \in M_{n}(\mathbb{K})$ be an involutive matrix. There exists $0 \leq r \leq n$ such that $U$ is similar to ${\rm diag}(\underbrace{1,\ldots,1}_{n-r},\underbrace{-1,\ldots,-1}_{r})$. Let $m:={\rm min}(n-r,r)$. We have $m \leq E\left[\frac{n}{2}\right]$and $i^{*}(U)=n-m$. In particular, $i^{*}(U) \geq E^{s}\left[\frac{n}{2}\right]$.

\end{lem}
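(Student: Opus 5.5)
Since $\mathbb{K}$ has odd characteristic, $2$ is invertible. The identity $U^{2}=I_{n}$ says that the polynomial $X^{2}-1=(X-1)(X+1)$ annihilates $U$, so $\pi_{U}$ divides $(X-1)(X+1)$. Because $1\neq -1$ in $\mathbb{K}$, this polynomial is split with simple roots, hence $U$ is diagonalizable with eigenvalues in $\{1,-1\}$. Concretely, the projectors $\frac{1}{2}(I_{n}+U)$ and $\frac{1}{2}(I_{n}-U)$ give $\mathbb{K}^{n}=\ker(U-I_{n})\oplus\ker(U+I_{n})$. Let $r:=\dim\ker(U+I_{n})$. In a basis adapted to this decomposition, $U$ becomes ${\rm diag}(1,\ldots,1,-1,\ldots,-1)$ with $n-r$ ones and $r$ minus ones.

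Next I compute the invariant factors of $D:={\rm diag}(1,\ldots,1,-1,\ldots,-1)$. I use the standard fact that for a diagonal matrix the invariant factors are obtained by distributing the eigenvalues: $P_{n}$ is the product of the distinct linear factors, $P_{n-1}$ is the product of the distinct linear factors remaining after one copy of each is removed, and so on. This can be justified by Theorem \ref{23}, via the gcd of minors of $XI_{n}-D$, or by the Chinese remainder lemma $C((X-1)(X+1))\sim{\rm diag}(1,-1)$. Set $m:={\rm min}(n-r,r)$. Then exactly $m$ invariant factors equal $(X-1)(X+1)$. A further $n-r-m$ or $r-m$ of them (whichever is nonzero) equal $X-1$ or $X+1$, and all the rest equal $1$. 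The nontrivial ones number $m+(n-m-m)=n-m$, and each has a nonzero root ($1$ or $-1$, and $-1\neq 0$). Hence $i^{*}(U)=n-m$.

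Finally, $2m\leq (n-r)+r=n$ gives $m\leq E\left[\frac{n}{2}\right]$. Therefore
\[i^{*}(U)=n-m\geq n-E\left[\frac{n}{2}\right]=E^{s}\left[\frac{n}{2}\right].\]
The only delicate step is the invariant-factor computation for the diagonal matrix. I would verify it by writing $D$ as a block-diagonal sum of $m$ blocks ${\rm diag}(1,-1)\sim C(X^{2}-1)$ and $n-2m$ scalar blocks with the same eigenvalue. The divisibility chain $1\mid\cdots\mid X\mp1\mid\cdots\mid X^{2}-1$ then holds, and uniqueness in Theorem \ref{21} identifies these as the invariant factors.
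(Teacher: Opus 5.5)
Your proof is correct and follows essentially the same route as the paper. Both diagonalize $U$ using $X^{2}-1=(X-1)(X+1)$ with $1\neq -1$, then read off the invariant factors by showing $U$ is similar to a block diagonal matrix of $n-2m$ blocks $C(X\mp 1)$ and $m$ blocks $C(X^{2}-1)$, and appeal to uniqueness of the Frobenius form. The only cosmetic difference is that you get this similarity blockwise, via $C(X^{2}-1)\sim{\rm diag}(1,-1)$, whereas the paper checks that the block companion matrix is annihilated by $X^{2}-1$ and has the right eigenvalue multiplicities.
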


\begin{proof}

Since $U$ is involutive, $P(U)=0_{n,n}$ with $P(X)=X^{2}-1$. Besides, $P(X)=(X-1)(X+1)$ and $1 \neq -1$ since $\mathbb{K}$ is a field of odd characteristic. Hence, $U$ is diagonalizable and its eigenvalues belong to $\{-1,1\}$, that is to say there exists $0 \leq r \leq n$ such that $U$ is similar to $B:={\rm diag}(\underbrace{1,\ldots,1}_{n-r},\underbrace{-1,\ldots,-1}_{r})$.
\\
\\Suppose that $r \leq n-r$. We have $r \leq E\left[\frac{n}{2}\right]$. We consider the matrix:
\[C:={\rm diag}(\underbrace{C(X-1),\ldots,C(X-1)}_{n-2r},\underbrace{C(P),\ldots,C(P)}_{r}).\]
\noindent We have $P(C)=0_{n,n}$, $-1$ is an eigenvalue of $C$ with multiplicity $r$ and $1$ is an eigenvalue of $C$ with multiplicity $n-r$. Hence, $C$ is similar to $B$ and therefore to $U$. So, the invariant factors of $U$ are $\underbrace{1,\ldots,1}_{r},\underbrace{X-1,\ldots,X-1}_{n-2r},\underbrace{P,\ldots,P}_{r}$ and $i^{*}(U)=n-r \geq n-E\left[\frac{n}{2}\right]=E^{s}\left[\frac{n}{2}\right]$.
\\
\\If $n-r \leq r$, the arguments are similar.

\end{proof}

\begin{lem}
\label{29}

Let $\mathbb{K}$ be a field of characteristic 2 and $U \in M_{n}(\mathbb{K})$ be an involutive matrix. We have $i^{*}(U) \geq E^{s}\left[\frac{n}{2}\right]$.

\end{lem}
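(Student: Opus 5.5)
In characteristic $2$ we have $X^{2}-1=(X-1)^{2}$. Hence $U^{2}=I_{n}$ means $(U-I_{n})^{2}=0_{n,n}$, so $N:=U-I_{n}$ is nilpotent with $N^{2}=0_{n,n}$. The plan is to read off the invariant factors of $U$ from the nilpotent matrix $N$.

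\textbf{Invariant factors of $U$.} The minimal polynomial $\pi_{U}$ divides $(X-1)^{2}$. Every invariant factor of $U$ divides $\pi_{U}$ (Theorem \ref{21}), so each one is $1$, $X-1$ or $(X-1)^{2}$. The only root of these polynomials in the algebraic closure is $1\neq 0$. Therefore every invariant factor different from $1$ is counted in $i^{*}(U)$, and $i^{*}(U)$ equals the number of nontrivial invariant factors of $U$.

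\textbf{Counting the nontrivial factors.} Let $s$ be the number of factors equal to $(X-1)^{2}$ and $t$ the number equal to $X-1$. Taking degrees in $P_{1}\ldots P_{n}=\chi_{U}$ gives $2s+t=n$, and $i^{*}(U)=s+t$. Since $s+t \geq s + t/2 = n/2$ and $s+t$ is an integer, we get $i^{*}(U)\geq E^{s}\left[\frac{n}{2}\right]$. Equivalently, $s$ is the number of companion blocks $C((X-1)^{2})$ in the Frobenius form, so $s={\rm rank}(N)\leq E\left[\frac{n}{2}\right]$ because ${\rm Im}\,N\subset{\rm Ker}\,N$, and then $s+t=n-s\geq n-E\left[\frac{n}{2}\right]=E^{s}\left[\frac{n}{2}\right]$.

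\textbf{Main obstacle.} The only point needing care is justifying that the invariant factors divide the minimal polynomial and multiply to $\chi_{U}$. Both facts are given directly by Theorem \ref{21}. Everything else is the degree count above.
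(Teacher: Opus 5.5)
Your proof is correct and is essentially the paper's argument. Every invariant factor of $U$ lies in $\{1, X+1, (X+1)^{2}\}$ (recall $X-1=X+1$ in characteristic $2$), and the degree count $2s+t=n$ then gives $i^{*}(U)=n-s\geq E^{s}\left[\frac{n}{2}\right]$. The paper splits into the two cases $\pi_{U}=X+1$ and $\pi_{U}=(X+1)^{2}$, which your uniform count (allowing $s=0$) already covers. Your rank argument via ${\rm Im}\,N\subset{\rm Ker}\,N$ is a valid additional justification of $s\leq E\left[\frac{n}{2}\right]$.
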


\begin{proof}

Let $P_{1},\ldots, P_{n}$ be the invariant factors of $U$. Since the matrix $U$ is involutive, $P(U)=0_{n,n}$ with $P(X)=X^{2}-1$. Besides, $P(X)=(X+1)^{2}$ since $\mathbb{K}$ is a field of characteristic 2. Hence, $\pi_{U}(X)$ divides $(X+1)^{2}$. 
\\
\\If $\pi_{U}(X)=X+1$ then $P_{n}(X)=X+1$. Since $P_{1} \mid \ldots \mid P_{n}$ and ${\rm deg}(P_{1} \ldots P_{n})=n$, we have $P_{1}=\ldots=P_{n}$ and $i^{*}(U)=n \geq E^{s}\left[\frac{n}{2}\right]$.
\\
\\If $\pi_{U}(X)=(X+1)^{2}$ then $P_{n}(X)=(X+1)^{2}$. Since $P_{1} \mid \ldots \mid P_{n}$, we have $P_{i}(X) \in \{1,X+1,(X+1)^{2}\}$. Let $k$ be the number of invariant factors equal to $X+1$ and $j$ be the number of invariant factors equal to $(X+1)^{2}$. We have $i^{*}(U)=k+j$ and $k+2j={\rm deg}(P_{1} \ldots P_{n})=n$. Besides, $1 \leq j \leq E\left[\frac{n}{2}\right]$. Hence, 
\[i^{*}(U)=n-j \geq n-E\left[\frac{n}{2}\right]=E^{s}\left[\frac{n}{2}\right].\]

\end{proof}

\noindent We can now prove the main result of this section.

\begin{thm}

Let $n$ be a positive integer and $\mathbb{K}$ be a commutative field. A matrix $A \in M_{n}(\mathbb{K})$ can be written as a product of a nilpotent matrix and an involutive matrix if and only if ${\rm det}(A)=0$ and $A$ has at most $E\left[\frac{n}{2}\right]$ invariant factors different from both 1 and $X^{j}$, with $1 \leq j \leq n$.

\end{thm}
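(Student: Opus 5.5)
We prove both implications, using $i^{*}$ and the results of the previous subsection; for $n \leq 2$ the cases are handled directly.

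\emph{Necessity.} Suppose $A=NU$ with $N$ nilpotent and $U^{2}=I_{n}$. Since $\det(N)=0$, we get $\det(A)=0$. We want $i^{*}(A) \leq E\left[\frac{n}{2}\right]$. Write $N=AU^{-1}=AU$, since $U^{-1}=U$. Because $N$ is nilpotent, all its invariant factors are powers of $X$, so $i^{*}(N)=0$. Theorem \ref{25} applied to the pair $(A,U)$ gives
\[0=i^{*}(AU) \geq i^{*}(A)+i^{*}(U)-n,\]
hence $i^{*}(A) \leq n-i^{*}(U)$. By Lemma \ref{28} in odd characteristic and Lemma \ref{29} in characteristic $2$, we have $i^{*}(U) \geq E^{s}\left[\frac{n}{2}\right]$. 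Therefore $i^{*}(A) \leq n-E^{s}\left[\frac{n}{2}\right]=E\left[\frac{n}{2}\right]$.

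\emph{Sufficiency.} Assume $\det(A)=0$ and $i^{*}(A) \leq E\left[\frac{n}{2}\right]$. For $n=1$, $A=0=0\cdot 1$. For $n=2$, this is Lemma \ref{27}. Let $n\geq 3$. We choose an involution $U$ with $i^{*}(U)=n-i^{*}(A)$; this requires $n-i^{*}(A)\geq E^{s}\left[\frac{n}{2}\right]$, which holds by hypothesis.

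The construction of $U$ works in every characteristic. Set $m:=i^{*}(A)$ and take $m$ blocks $C(X^{2}-1)$ together with $n-2m$ blocks $C(X-1)$. Each companion block is involutive because $X^{2}-1$ annihilates it. The resulting involution has invariant factors $1$ ($m$ times), $X-1$ ($n-2m$ times) and $X^{2}-1$ ($m$ times), so $i^{*}(U)=n-m$. This is the same computation as in the proof of Lemma \ref{28}, and it remains valid in characteristic $2$ because $X-1$ divides $X^{2}-1$.

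Now $i^{*}(A)+i^{*}(U)=n$ and $\det(A)\det(U)=0$, so Theorem \ref{26} gives $A'=PAP^{-1}$ and $U'=QUQ^{-1}$ with $A'U'$ nilpotent. Then $N':=A'U'$ is nilpotent and $A'=N'U'$, since $U'^{2}=I_{n}$. Conjugating back by $P$, we obtain $A=(P^{-1}N'P)(P^{-1}U'P)$, which is a product of a nilpotent matrix and an involutive matrix.

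The main obstacle is not the logic, which reduces to Theorems \ref{25} and \ref{26}. It lies in two checks: that the involution $U$ with exactly the prescribed $i^{*}$ exists in all characteristics, and that the small cases $n\leq 2$, excluded by Theorem \ref{26}, are covered by Lemma \ref{27} and the trivial case $n=1$.
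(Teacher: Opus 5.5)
Your proposal is correct and follows the paper's proof essentially step for step. For necessity you use $AU=N$, Theorem \ref{25} and the bound $i^{*}(U)\geq E^{s}\left[\frac{n}{2}\right]$ from Lemmas \ref{28} and \ref{29}; for sufficiency you handle $n\leq 2$ directly (Lemma \ref{27}) and, for $n\geq 3$, feed the involution ${\rm diag}(C(X-1),\ldots,C(X-1),C(X^{2}-1),\ldots,C(X^{2}-1))$ with $i^{*}(U)=n-i^{*}(A)$ into Theorem \ref{26}, exactly as the paper does, including its remark that the construction still works in characteristic $2$.
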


\begin{proof}

\uwave{$\Rightarrow$ :} Let $n \geq 1$ and $A \in M_{n}(\mathbb{K})$. Suppose that there exists $(N,U) \in (M_{n}(\mathbb{K}))^{2}$ such that $A=NU$, $N$ is nilpotent and $U$ is involutive. 
\\
\\First, we have ${\rm det}(A)={\rm det}(N){\rm det}(U)=0$. 
\\
\\Besides, we have $AU=NUU=N$, that is to say $AU$ is nilpotent. Hence, $i^{*}(AU)=0$, since its invariant factors divide $\chi_{N}(X)=X^{n}$. By Theorem \ref{25} and Lemmas \ref{28} and \ref{29}, we have:
\[0=i^{*}(AU) \geq i^{*}(A)+i^{*}(U)-n \geq i^{*}(A)+E^{s}\left[\frac{n}{2}\right]-n=i^{*}(A)-E\left[\frac{n}{2}\right].\]
So, $i^{*}(A) \leq E\left[\frac{n}{2}\right]$.
\\
\\\noindent \uwave{$\Leftarrow$ :} Let $n \geq 1$ and $A \in M_{n}(\mathbb{K})$. Suppose that ${\rm det}(A)=0$ and $A$ has at most $E\left[\frac{n}{2}\right]$ invariant factors different from both 1 and $X^{j}$.
\\
\\If $n=1$ then $A=(0)=(0)\times (1)$. If $n=2$ then, by Lemma \ref{27}, $A$ can be written as a product of a nilpotent matrix and an involutive matrix. 
\\
\\Suppose now that $n \geq 3$. Let $r:=i^{*}(A)$. By hypothesis, $r \leq E\left[\frac{n}{2}\right]$. Let $P(X):=(X-1)(X+1)$ ($P(X)$ equals to $(X+1)^{2}$ if the characteristic of $\mathbb{K}$ is 2) and $U:={\rm diag}(\underbrace{C(X-1),\ldots,C(X-1)}_{n-2r},\underbrace{C(P),\ldots,C(P)}_{r})$ ($U$ exists since $r \leq E\left[\frac{n}{2}\right]$). The matrix $U$ is involutive and $i^{*}(U)=n-r$. So, we have $n \geq 3$, $A$ non-invertible and $i^{*}(A)+i^{*}(U)=n$. Hence, by Theorem \ref{26}, there exist $A'$ similar to $A$, $U'$ similar to $U$ and $N'$ nilpotent such that $A'U'=N'$. In particular, $A'=N'U'$. Besides, there exists $Q \in GL_{n}(\mathbb{K})$ satisfying $A=QA'Q^{-1}$. Hence, $A=(QN'Q^{-1})(QU'Q^{-1})$, $QN'Q^{-1}$ is nilpotent and $QU'Q^{-1}$ is involutive.

\end{proof}

\begin{ex}
{\rm Let $\mathbb{K}=\mathbb{Z}/5\mathbb{Z}$. 
\begin{itemize}
\item Let $A:=\begin{pmatrix}
   0 & 0 & 0 & 1   \\
   2 & 0 & 1 & 2   \\
	 0 & 4 & 0 & 2  \\
	 3 & 3 & 4 & 1 
   \end{pmatrix}$. The invariant factors of $A$ are $1,1,X+2,X^{2}(X+2)$. Hence, by Theorem \ref{11}, $A$ can be written as a product of a nilpotent matrix and an involutive matrix. 
\item Let $B:=\begin{pmatrix}
    2 & 3 & 4 & 4   \\
    0 & 0 & 4 & 4   \\
		0 & 0 & 2 & 0  \\
		0 & 0 & 0 & 2
   \end{pmatrix}$. The invariant factors of $B$ are $1,X+3,X+3,X(X+3)$. Hence, by Theorem \ref{11}, $B$ can not be written as a product of a nilpotent matrix and an involutive matrix. 
\end{itemize}
}
\end{ex}

\subsection{Some results to conclude}

We give in this subpart some additional information about the decomposition studied in this section.

\begin{prop}
\label{210}

Let $n \geq 1$ and $A \in M_{n}(\mathbb{K})$. If the first nontrivial invariant factor of $A$ has 0 as a root then $A$ can be written as a product of a nilpotent matrix and an involutive matrix.

\end{prop}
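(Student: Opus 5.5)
The plan is to reduce the statement to Theorem \ref{11} by showing that the hypothesis forces ${\rm det}(A)=0$ and $i^{*}(A) \leq E\left[\frac{n}{2}\right]$.

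Let $P_{1} \mid \ldots \mid P_{n}$ be the invariant factors of $A$, and let $P_{k}$ be the first one different from $1$. Then $P_{k},\ldots,P_{n}$ are exactly the nontrivial invariant factors.

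\emph{Determinant.} Since $X \mid P_{k}$, we get $X \mid \chi_{A}=P_{1}\ldots P_{n}$. Hence $0$ is an eigenvalue and ${\rm det}(A)=0$.

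\emph{Counting.} For every $i \geq k$ we have $P_{k} \mid P_{i}$, so $X \mid P_{i}$. Any nontrivial invariant factor counted by $i^{*}(A)$ is different from $X^{j}$, so it also has a nonzero root in the algebraic closure. Such a factor is divisible by $X$ and is not a power of $X$, hence has degree at least $2$. Let $r:=i^{*}(A)$. All nontrivial factors divide $\chi_{A}$ and their degrees add up to ${\rm deg}(\chi_{A})=n$. Each of the $r$ counted factors contributes degree at least $2$, so $2r \leq n$, that is $r \leq E\left[\frac{n}{2}\right]$.

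Theorem \ref{11} now applies directly, and $A$ is a product of a nilpotent matrix and an involutive matrix. There is no real obstacle. The only point needing care is the degree-$2$ lower bound, which uses that $X$ divides every nontrivial invariant factor, not only the first. This is exactly where the divisibility chain $P_{k} \mid P_{i}$ is used. The cases $n=1,2$ need no separate treatment since Theorem \ref{11} covers all $n$.
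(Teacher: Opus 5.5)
Your argument is correct and is essentially the paper's first proof. Like the paper, you reduce to Theorem \ref{11} by noting that every invariant factor counted by $i^{*}(A)$ is divisible by $X$ (via the divisibility chain) but is not a power of $X$, so it has degree at least $2$ and $2\,i^{*}(A)\leq n$; you are in fact slightly more explicit than the paper, since you also check ${\rm det}(A)=0$. The paper additionally gives a second, constructive proof: it factors each Frobenius block $C(P)$ with $P(0)=0$ directly as the nilpotent shift times an explicit upper triangular involution, which avoids relying on Theorem \ref{26}, the result behind Theorem \ref{11}.
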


\begin{proof}

We give two proofs of this result. Let $n \geq 1$ and $A \in M_{n}(\mathbb{K})$ such that the first nontrivial invariant factor of $A$ has 0 as a root.
\\
\\\uwave{First proof:} We use Theorem \ref{11}.
\\
\\The invariant factors of $A$ are $\underbrace{1,\ldots,1}_{r},X^{i}Q_{r+1}(X),X^{i}Q_{r+2}(X),\ldots,X^{i}Q_{n}(X)$, with $Q_{r+1} \mid \ldots \mid Q_{n}$. Hence, the first invariant factor different from both 1 and a power of $X$ (if it exists) has degree at least two. So, $i^{*}(A) \leq E\left[\frac{n}{2}\right]$. By Theorem \ref{11}, $A$ can be written as a product of a nilpotent matrix and an involutive matrix.
\\
\\\uwave{Second proof:} We give a constructive proof.
\\
\\Let $P(X):=X^{k}+\sum_{i=0}^{k-1} a_{i}X^{i} \in \mathbb{K}[X]$ with $a_{0}=0$. We have:
\[C(P)=\begin{pmatrix}
   0 & 0 & \ldots & 0 & 0  \\
      1 & 0 & \ldots & 0 & -a_{1}   \\
		  0 & \ddots & \ddots & \vdots & \vdots  \\
		  \vdots & \ddots & \ddots & 0 & -a_{k-2}  \\
		  0 & \ldots & 0 & 1 &  -a_{k-1} \\
   \end{pmatrix}=\underbrace{\begin{pmatrix}
   0 & 0 & \ldots & 0 & 0  \\
      1 & 0 & \ldots & 0 & 0   \\
		  0 & \ddots & \ddots & \vdots & \vdots  \\
		  \vdots & \ddots & \ddots & 0 & 0  \\
		  0 & \ldots & 0 & 1 &  0 \\
   \end{pmatrix}}_{N}\underbrace{\begin{pmatrix}
   1 & 0 & \ldots & 0 & -a_{1}  \\
      0 & 1 & \ldots & 0 & -a_{2}   \\
		  \vdots & \ddots & \ddots & \vdots & \vdots  \\
		  \vdots & \ddots & \ddots & 1 & -a_{k-1}  \\
		  0 & \ldots & \ldots & 0 &  -1 \\
   \end{pmatrix}}_{U}.\]
	
\noindent The matrix $N$ is nilpotent and the matrix $U$ is involutive. The Frobenius normal form of $A$ is a block diagonal matrix in which every block has the form of the above matrix. Besides, a block diagonal matrix whose blocks are all nilpotent is nilpotent and a block diagonal matrix whose blocks are all involutive is involutive. Hence, $A$ can be written as a product of a nilpotent matrix and an involutive matrix.

\end{proof}

\begin{prop}
\label{211}

Let $A \in M_{3}(\mathbb{K})$. The matrix $A$ can be written as a product of a nilpotent matrix and an involutive matrix if and only if the first nontrivial invariant factor of $A$ has 0 as a root.

\end{prop}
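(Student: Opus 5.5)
The converse direction is exactly Proposition \ref{210}, so only the direct implication needs work. Assume $A\in M_{3}(\mathbb{K})$ is a product of a nilpotent matrix and an involutive matrix. By Theorem \ref{11}, ${\rm det}(A)=0$ and $i^{*}(A)\leq E\left[\frac{3}{2}\right]=1$. The plan is to enumerate the possible lists of invariant factors $P_{1}\mid P_{2}\mid P_{3}$ with $\deg(P_{1}P_{2}P_{3})=3$ and $X\mid P_{1}P_{2}P_{3}$. In each case we check that either the first nontrivial invariant factor is divisible by $X$, or $i^{*}(A)\geq 2$, which is excluded.

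We split according to the number of nontrivial invariant factors.

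\textbf{One nontrivial factor.} Then $P_{3}=\chi_{A}$ and $P_{1}=P_{2}=1$. Since ${\rm det}(A)=0$, the polynomial $\chi_{A}$ has $0$ as a root. So the first nontrivial invariant factor, which is $\chi_{A}$ itself, has $0$ as a root.

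\textbf{Two nontrivial factors.} Then $P_{2}$ has degree $1$, say $P_{2}=X-\lambda$, and $P_{3}=(X-\lambda)(X-\mu)$ with $\lambda,\mu\in\mathbb{K}$. If $\lambda=0$, we are done. Otherwise $0$ must be a root of $\chi_{A}=(X-\lambda)^{2}(X-\mu)$, which forces $\mu=0$. In that case both $P_{2}$ and $P_{3}$ have the nonzero root $\lambda$, so $i^{*}(A)=2>1$, a contradiction.

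\textbf{Three nontrivial factors.} Then $P_{1}=P_{2}=P_{3}=X-\lambda$. Since ${\rm det}(A)=0$, we must have $\lambda=0$, so $P_{1}=X$ has $0$ as a root.

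This exhausts all cases and proves the proposition.

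The only delicate point is the two-factor case. There the degree constraint forces $P_{2}$ to be linear, and the divisibility $P_{2}\mid P_{3}$ means that a nonzero root of $P_{2}$ reappears in $P_{3}$. That repetition is exactly what makes $i^{*}(A)=2$. In dimension $3$ this counting is tight, which is why the equivalence holds here (it presumably fails for larger $n$).
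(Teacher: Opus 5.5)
Your proof is correct and follows the paper's proof: you use the same split by the number of nontrivial invariant factors, with $\det(A)=0$ settling the one- and three-factor cases and the bound $i^{*}(A)\leq 1$ from Theorem \ref{11} settling the two-factor case. In that case the paper argues by divisibility that the factor which is a power of $X$ must be $P_{2}$, while you write $P_{2}=X-\lambda$ explicitly; your appeal to $\det(A)=0$ to force $\mu=0$ is unnecessary, since $\lambda\neq 0$ already gives both $P_{2}$ and $P_{3}$ the nonzero root $\lambda$, so $i^{*}(A)=2$ regardless of $\mu$.
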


\begin{proof}

If the first nontrivial invariant factor of $A$ has 0 as a root then $A$ has the desired decompostion, by the previous proposition. Suppose that $A$ can be written as a product of a nilpotent matrix and an involutive matrix. We consier three cases:
\begin{itemize}
\item Suppose that $A$ has one nontrivial invariant factor. Since $A$ is non-invertible, this factor has 0 as a root.
\item Suppose that $A$ has two nontrivial invariant factors. By Theorem \ref{11}, one at least of this factor is necessarily a power of $X$. We denote it by $Q$. If the other factor is not a power of $X$ then, by divisibility, $Q$ is the first nontrivial factor of $A$.
\item Suppose that $A$ has three nontrivial invariant factors. In this case $A$ is scalar and non-invertible, that is to say $A=0_{n,n}$. Hence, the three invariant factors are equal to $X$.
\end{itemize}

\end{proof}

\begin{rem}
{\rm If $n \geq 4$ the result above is false. For instance, ${\rm diag}(C(X-1),C(X^{2}(X-1)))$ can be written as the desired product, by Theorem \ref{11}, but its first nontrivial invariant factor does not have 0 as a root.
}
\end{rem}

\begin{prop}
\label{212}

Let $n \geq 1$ and $A \in M_{n}(\mathbb{K})$. The matrix $A$ can be written as a product of a nilpotent matrix and an involutive matrix if and only if $A$ can be written as a product of an involutive matrix and a nilpotent matrix.

\end{prop}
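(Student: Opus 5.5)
The plan is to avoid the similarity-invariant characterization and use only transposition and similarity. Everything rests on one fact: every matrix is similar to its transpose. This follows from Proposition \ref{22}, because the $k \times k$ minors of $XI_{n}-{}^tA$ are exactly the $k \times k$ minors of $XI_{n}-A$. By Theorem \ref{23}, $A$ and ${}^tA$ then have the same invariant factors.

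Suppose $A=NU$ with $N$ nilpotent and $U$ involutive. Transposing gives ${}^tA={}^tU\,{}^tN$. Here ${}^tU$ is involutive, since $({}^tU)^{2}={}^t(U^{2})=I_{n}$, and ${}^tN$ is nilpotent, since $({}^tN)^{k}={}^t(N^{k})$. So ${}^tA$ is a product of an involutive matrix followed by a nilpotent matrix.

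Next choose $Q \in GL_{n}(\mathbb{K})$ with $A=Q\,{}^tA\,Q^{-1}$. Then
\[A=(Q\,{}^tU\,Q^{-1})(Q\,{}^tN\,Q^{-1}).\]
Conjugation preserves both involutivity and nilpotency, so this expresses $A$ as an involutive matrix times a nilpotent one. The converse is proved by the same argument with the roles of the two factors exchanged.

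An alternative route is to note directly that $A=NU=U(UNU)$ with $UNU=U^{-1}NU$ nilpotent, which works immediately. I would present this one-line identity as the proof, since it is even simpler than the transpose argument. The converse is $A=UN=(UNU)U$. No step is a real obstacle; the only thing to check is that $U^{-1}=U$, so that $UNU$ is a conjugate of $N$ and hence nilpotent.
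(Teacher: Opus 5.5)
Your proposal is correct, and the identity you choose to present, $A=NU=U(UNU)$ and conversely $A=UN=(UNU)U$, with $UNU=U^{-1}NU$ nilpotent because $U^{-1}=U$, is exactly the paper's proof. Your transpose argument is also valid; it is essentially the argument the paper uses for the analogous idempotent statement (Proposition \ref{32}), where the conjugation trick is not available because an idempotent matrix need not be invertible.
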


\begin{proof}

If $A=NU$, with $N$ nilpotent and $U$ involutive, then $A=U(UNU)=U(UNU^{-1})$. If $A=UN$, with $N$ nilpotent and $U$ involutive, then $A=(UNU)U$.

\end{proof}

\noindent We conclude with the two following remarks:
\begin{itemize}
\item The decomposition $A=NU$ is in general not unique. First, we have $A=(-N)(-U)$. However, in general, there are other possibilities. For instance, we have, with $\mathbb{K}=\mathbb{Z}/5\mathbb{Z}$:
\[\begin{pmatrix}
   0 & 0 \\
   1 & 0
   \end{pmatrix}\begin{pmatrix}
   1 & 1 \\
   0 & -1
   \end{pmatrix}=\begin{pmatrix}
   0 & 0 \\
   1 & 1  
   \end{pmatrix}=\begin{pmatrix}
   0 & 0 \\
   3 & 0
   \end{pmatrix}\begin{pmatrix}
   2 & 2 \\
   1 & 3
   \end{pmatrix}.\]
	\\
	
\item The nilpotent and the involutive matrix do not commute in general. For instance, we have:
\[\begin{pmatrix}
   0 & 0 \\
   1 & 0
   \end{pmatrix}\begin{pmatrix}
   1 & 1 \\
   0 & -1
   \end{pmatrix}=\begin{pmatrix}
   0 & 0 \\
   1 & 1  
   \end{pmatrix},~~~~\begin{pmatrix}
   1 & 1 \\
   0 & -1
   \end{pmatrix}\begin{pmatrix}
   0 & 0 \\
   1 & 0
   \end{pmatrix}=\begin{pmatrix}
   1 & 0 \\
   -1 & 0
   \end{pmatrix}.\]
\end{itemize}

\section{Product of a nilpotent and an idempotent matrix}
\label{nidem}

In this last section, we characterize the second kind of matrix decomposition presented in the introduction.

\subsection{Proof of the decomposition theorem}

The aim of this subpart is to prove our second main theorem. To accomplish this task, we need the following result:

\begin{thm}[Theorem of Thompson, \cite{T} Theorem 6]
\label{31}

Let $n \geq 2$ and $1 \leq k \leq n-1$. Let $\mathbb{K}$ be a field, $C \in M_{n}(\mathbb{K})$, and $A \in M_{n-k}(\mathbb{K})$. Let $P_{1},\ldots,P_{n}$ be the invariant factors of $C$ and $Q_{1},\ldots,Q_{n-k}$ be the invariant factors of $A$. Then $A$ is a principal submatrix of some similarity transform of $C$ if and only if the following conditions holds:
\begin{itemize}
\item $P_{i} \mid Q_{i} \mid P_{i+2k}$ for all $1 \leq i \leq n-k$, with $P_{n+1}=\ldots=P_{n+k}=0$;
\item ${\rm deg}(P_{1}\ldots P_{n})=n$;
\item ${\rm deg}(Q_{1}\ldots Q_{n-k})=n-k$.
\end{itemize}

\end{thm}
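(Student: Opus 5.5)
Since Theorem \ref{31} is quoted from \cite{T}, we only outline how one would prove it. The idea is to translate everything into Smith normal forms over the principal ideal domain $\mathbb{K}[X]$. By Theorem \ref{23}, the invariant factors of $C$ (respectively $A$) are the invariant factors of the polynomial matrix $XI_{n}-C$ (respectively $XI_{n-k}-A$). Saying that $A$ is a principal submatrix of some $C'$ similar to $C$ means that $XI_{n-k}-A$ is the submatrix of $XI_{n}-C'$ obtained by deleting $k$ rows and the $k$ columns with the same indices. The two degree conditions are automatic from $\chi_{C}$ and $\chi_{A}$, so the content of the theorem is the interlacing $P_{i}\mid Q_{i}\mid P_{i+2k}$.

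\emph{Necessity.} We reduce to $k=1$ by deleting one row and column at a time. If the claim holds for $k=1$, iterating it $k$ times composes the chains into $P_{i}\mid Q_{i}\mid P_{i+2k}$, with the convention $P_{j}=0$ for $j>n$. For $k=1$ we split the deletion into two steps: first delete a row, then delete a column. The key lemma is that if $M$ is an $m\times m$ matrix over a PID with invariant factors $d_{1}\mid\cdots\mid d_{m}$, and $M'$ is obtained by deleting one row, then the invariant factors $d'_{1},\ldots,d'_{m-1}$ of $M'$ satisfy $d_{i}\mid d'_{i}\mid d_{i+1}$.

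To prove this lemma, we localize at each irreducible $p\in\mathbb{K}[X]$ and compare $p$-adic valuations of the determinantal divisors $\Delta_{j}$. Every $j\times j$ minor of $M'$ is a minor of $M$, which gives $\Delta_{j}(M)\mid\Delta_{j}(M')$. Conversely, expanding a $(j+1)$-minor of $M$ along the deleted row shows that $\Delta_{j}(M')$ divides it, whence $\Delta_{j}(M')\mid\Delta_{j+1}(M)$. Combining these relations for consecutive $j$ in valuation form, equivalently the classical interlacing for submodules of corank one over a DVR, yields the divisibilities. The column step is the transpose of the row step. Putting the two together gives $P_{i}\mid Q_{i}\mid P_{i+2}$.

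\emph{Sufficiency.} This is the step I expect to be the main obstacle.
\begin{itemize}
\item \emph{Reduction to $k=1$.} Given chains $P_{i}\mid Q_{i}\mid P_{i+2k}$ with the right degrees, we must construct intermediate sequences $R^{(1)},\ldots,R^{(k-1)}$ of monic divisibility chains. Each consecutive pair must satisfy the $k=1$ interlacing, and $R^{(j)}$ must have total degree $n-j$. I would build these prime by prime, working with the exponent partitions of each irreducible $p$. This is a combinatorial interpolation problem for partitions, and the final degree bookkeeping has to be arranged so that the total degrees come out exactly right.
\item \emph{The bordering construction for $k=1$.} Put $A$ in Frobenius normal form and look for $C'=\begin{pmatrix} A & b\\ c & d\end{pmatrix}$ with $b,c\in\mathbb{K}^{n-1}$ and $d\in\mathbb{K}$ such that $C'$ has invariant factors $P_{1},\ldots,P_{n}$. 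Here I would first try the cyclic-vector approach. Choose $b$ and $c$ adapted to the companion blocks, so that $XI_{n}-C'$ reduces by elementary operations to a block form. In that form the new invariant factors are obtained by merging consecutive $Q_{i}$'s into products dictated by the target $P_{i}$'s, together with one free polynomial of the right degree, which is realized by choosing $d$ and the last coordinates of $b$ and $c$.
\item \emph{Realizing the targets.} The delicate point is to show that every $P$ allowed by the interlacing is actually attained, not merely some of them. For this I would follow Thompson's original reduction: first treat the case of a single prime, which is a partition interlacing problem, and then glue the primes together using the Chinese remainder theorem at the level of $\mathbb{K}[X]$-modules.
\end{itemize}
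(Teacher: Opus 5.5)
The paper gives no proof of this theorem. It is quoted from Thompson, and Section~3 uses only the necessity half (in fact only $Q_u\mid P_{u+2k}$). Judged against the statement itself, your proposal has two genuine gaps.

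\textbf{Necessity.} Your row-deletion lemma $d_i\mid d'_i\mid d_{i+1}$ is true, but it does not follow from what you prove. Put $a_j=v_p(\Delta_j(M))$ and $b_j=v_p(\Delta_j(M'))$. The relations $\Delta_j(M)\mid\Delta_j(M')\mid\Delta_{j+1}(M)$ only say $a_j\le b_j\le a_{j+1}$, whereas you need $a_i-a_{i-1}\le b_i-b_{i-1}\le a_{i+1}-a_i$. For $m=2$, the values $(a_1,a_2)=(1,3)$ and $b_1=3$ satisfy the former, yet would give $v_p(d'_1)=3>2=v_p(d_2)$. So ``combining these relations in valuation form'' does not yield the divisibilities. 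The DVR interlacing is not equivalent to them; it is the real content and needs its own argument.

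One correct route works over the localization $R$ of $\mathbb{K}[X]$ at $p$. The rows of $M$ form a basis of a lattice $L\subset R^m$, and the rows of $M'$ span a direct summand $L'$ of $L$. The torsion submodule of $R^m/L'$ (invariants $d'_i$) then injects into $R^m/L$ (invariants $d_i$) with cyclic cokernel. A submodule with cyclic quotient obeys the horizontal-strip (Pieri) interlacing, which gives the lemma.

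You also need a rectangular version of the lemma for the column step, since the transpose of $M'$ is $m\times(m-1)$; that is where the convention $P_{n+1}=\cdots=P_{n+k}=0$ comes from. Incidentally, what your determinantal argument does give after $2k$ deletions, $\Delta_j(XI_{n-k}-A)\mid\Delta_{j+2k}(XI_n-C)$, already suffices for the paper's application. There one only needs $Q_1\cdots Q_{l-k}=\Delta_{l-k}(XI_{n-k}-E)$ to be a power of $X$ when $N$ is nilpotent and $l+k\le n$. It does not suffice for the theorem as stated.

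\textbf{Sufficiency.} This direction is not proved: the three bullets name steps without carrying any of them out, and part of the proposed mechanism does not fit the problem.

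The degree condition is global and can force irreducible factors that occur in neither $P$ nor $Q$. Take $\mathbb{R}$, $n=4$, $k=2$, $P=(1,1,X^2+1,X^2+1)$ and $Q=(1,X^2+1)$. This is realized by $C={\rm diag}(J,J)$ and $A=J$, with $J$ the companion matrix of $X^2+1$. The only intermediate chains compatible with both steps are $R=(1,1,(X^2+1)(X-a))$. So the interpolation cannot be done by working only with the exponent partitions of the primes of $P$ and $Q$.

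Likewise, ``treat a single prime, then glue by the Chinese remainder theorem'' has no evident meaning here, for three reasons:
\begin{itemize}
\item a principal submatrix is a compression to a coordinate subspace, which is not compatible with the primary decompositions of $C$ or $A$;
\item a direct sum of bordered primary pieces would add one row and column per piece, not one in total;
\item for $k=1$ and $A$ cyclic, every cyclic $C$ of size $n$ satisfies the interlacing whatever $\chi_C$ is, so the primes of $C$ need not be related to those of $A$ at all.
\end{itemize}

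What is missing is the actual $k=1$ bordering theorem of de S\'a and Thompson. This is an explicit computation showing that, for $A$ in Frobenius form, suitable $b,c,d$ realize every chain with $P_i\mid Q_i\mid P_{i+2}$. You would then also need either a proved interpolation lemma that handles the degree constraint, or a direct argument for general $k$.
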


\noindent In this text, we will only use the necessary condition. Now, we can prove our theorem.

\begin{thm}

Let $n$ be a positive integer and $\mathbb{K}$ be a commutative field. A matrix $A \in M_{n}(\mathbb{K})$ can be written as a product of a nilpotent matrix and an idempotent matrix if and only if the first nontrivial invarant factor of $A$ has 0 as a root.

\end{thm}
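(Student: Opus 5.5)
Both directions work at the level of the Frobenius normal form; the easy direction is a block-by-block construction.

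\emph{Sufficiency.} Assume the first nontrivial invariant factor $P_{r+1}$ has $0$ as a root. By divisibility, every nontrivial invariant factor $P_i$ then has constant term $0$, so each companion block $C(P_i)$ has zero first row. Write $P_i=X^{k}+\sum_{j=1}^{k-1}a_jX^j$. We factor $C(P_i)=NE$, where $N$ is the lower shift matrix of the second proof of Proposition \ref{210} and $E$ is the $k\times k$ matrix equal to $I_k$ except that its last column is ${}^t(-a_1,\ldots,-a_{k-1},0)$. One checks that $NE=C(P_i)$: the last row of $E$ is zero, so the shift discards it and moves the remaining rows down. Since the last row of $E$ is zero, $E^2=E$, so $E$ is idempotent. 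Taking block diagonal products and conjugating back to $A$ gives $A=NE$ with $N$ nilpotent and $E$ idempotent.

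\emph{Necessity.} Suppose $A=NE$ with $N$ nilpotent and $E$ idempotent of rank $k$. If $k=n$ then $E=I_n$ and $A=N$ is nilpotent, so every nontrivial invariant factor is a power of $X$ and we are done. If $k=0$ then $A=0$, whose invariant factors are all $X$. So assume $1\le k\le n-1$ and choose a basis with $E=\mathrm{diag}(I_k,0_{n-k,n-k})$. Then
\[A=\begin{pmatrix} N_{11} & 0\\ N_{21} & 0\end{pmatrix}.\]
This shows that $A$ has a zero column block of size $n-k$. It is better to use the similar matrix $EN$: it is similar to $NE$ up to the standard fact that $NE$ and $EN$ have the same nonzero Jordan structure. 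That is awkward, so instead we use $EA=ENE$, which is a different matrix. The cleanest route is to apply Thompson's theorem (Theorem \ref{31}) to $C:=A$ viewed differently.

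The $n\times n$ matrix $A$ is a principal submatrix of nothing useful, but the nilpotent $N$ has the principal submatrix $N_{11}$. We relate $A$ to $N_{11}$ as follows. The invariant factors of $A$ are those of $N_{11}$ multiplied in a controlled way by powers of $X$, since $A$ is block lower triangular with diagonal blocks $N_{11}$ and $0$. The cleaner statement is that $XI-A$ is equivalent to a matrix in which $X$ divides the last $n-k$ columns. Concretely, we apply Theorem \ref{31} to $C=N$, which has all invariant factors powers of $X$, with $P_1,\ldots,P_n$ of the form $1,\ldots,1,X^{a},\ldots$. This gives that the invariant factors $Q_i$ of $N_{11}$ satisfy $Q_i\mid P_{i+2(n-k)}$, so each $Q_i$ is a power of $X$. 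Hence $N_{11}$ is nilpotent.

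Then, from the triangular form, $\chi_A=X^{n-k}\chi_{N_{11}}=X^n$, so $A$ is nilpotent. That conclusion is wrong in general, since Proposition \ref{210}'s examples are not nilpotent. The error is that the triangular structure is $A=\begin{pmatrix}N_{11}&0\\N_{21}&0\end{pmatrix}$, whose characteristic polynomial is $X^{n-k}\chi_{N_{11}}$, and $N_{11}$ need not be nilpotent. Therefore Thompson must be used for the interlacing $P_i\mid Q_i$, not $Q_i\mid P_{i+2k}$.

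\emph{Main obstacle.} The correct plan is to transport divisibility by $X$. The non-constant invariant factors $P_i$ of $N$, for $i\ge$ (number of trivial factors of $N$)$+1$, are positive powers of $X$, and Thompson gives $P_i\mid Q_i$. The remaining task is to transfer this to the invariant factors of $A$ via the determinantal divisors $\Delta_j$ of Theorem \ref{23}, using that $XI-A$ is block triangular with diagonal blocks $XI-N_{11}$ and $XI_{n-k}$. I expect this bookkeeping to be the main difficulty: showing that $A$'s first nontrivial invariant factor is divisible by $X$, by comparing the ranks of $XI-A$ and of $A$ evaluated at $0$ (the number of trivial invariant factors equals the generic corank structure).

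A more robust alternative for necessity is the following. The first nontrivial invariant factor $P_{r+1}$ vanishes at $0$ iff $\dim\ker A\ge n-r$, where $r$ is the number of trivial invariant factors, since $\dim\ker A$ equals the number of invariant factors divisible by $X$. So it suffices to show that $\operatorname{rank}A\le r$, i.e. $\operatorname{rank}A$ is at most the cyclic-structure number $r=\max\{\operatorname{rank}(A-\lambda I)\text{ generically}\}$. Equivalently, $\operatorname{rank}A\le\operatorname{rank}(A-\lambda I)$ for all $\lambda$ in an extension field. For $A=NE$ I would aim to prove this last inequality directly, which is where I expect the real work to lie.
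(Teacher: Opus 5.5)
Your sufficiency argument is correct and is the paper's block factorization $C(P)=NV$ verbatim. The necessity direction, however, is not proved.

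The Thompson attempt fails for a reason different from the one you give. For the principal submatrix $N_{11}$ of codimension $n-k$, the relation $Q_i\mid P_{i+2(n-k)}$ is informative only when $i+2(n-k)\le n$, because of the convention $P_{n+1}=\cdots=0$. So it forces only $Q_1,\ldots,Q_{2k-n}$ (none if $2k\le n$) to be powers of $X$, and says nothing about nilpotency of $N_{11}$. That upper interlacing, restricted to small indices, is exactly what the paper uses. It conjugates the idempotent to $\mathrm{diag}(0_{k,k},I_{n-k})$ and combines $Q_u\mid P_{u+2k}$ for $u+2k\le n$ with the determinantal divisors $\Delta_l$ of $XI_n-PAP^{-1}$, showing that every $\Delta_l$ is $1$ or vanishes at $0$. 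The lower interlacing $P_i\mid Q_i$ you switch to cannot do the job. If $N$ is a single nilpotent Jordan block, then $P_1=\cdots=P_{n-1}=1$, and $P_i\mid Q_i$ carries no information about $N_{11}$ at all.

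Your final reformulation is sound once corrected. The right formula is $r=\min_{\lambda\in\overline{\mathbb{K}}}\mathrm{rank}(A-\lambda I)$, attained at a root of the first nontrivial invariant factor; it is not a max, since a generic $\lambda$ gives rank $n$. But the inequality $\mathrm{rank}(NE)\le\mathrm{rank}(NE-\lambda I)$ for $\lambda\neq0$ is the entire content of necessity, and you leave it as ``where the real work lies''. As submitted, only the easy direction is established.

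The missing step is actually short. If $x\in\mathrm{Im}(E)$ and $NEx=\lambda x$, then $Nx=\lambda x$, which forces $x=0$ because $N$ is nilpotent and $\lambda\neq0$. Hence $\ker(NE-\lambda I)\cap\mathrm{Im}(E)=\{0\}$, so $\mathrm{rank}(NE-\lambda I)\ge\mathrm{rank}(E)\ge\mathrm{rank}(NE)$, and this works verbatim over $\overline{\mathbb{K}}$. With that step supplied, your route would prove necessity without Thompson's theorem or the minor bookkeeping, which is more elementary than the paper's argument.
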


\begin{proof}

Let $n$ be a positive integer and $\mathbb{K}$ be a commutative field. Let $A \in M_{n}(\mathbb{K})$. 
\\
\\\uwave{$\Leftarrow$ :} Suppose that the first nontrivial invarant factor of $A$ has 0 as a root. By the divisibility relation, all the nontrivial invarant factors of $A$ have 0 as a root. Hence, the Frobenius normal form of $A$ is a block diagonal matrix in which every block has the following form:
\[\begin{pmatrix}
   0 & 0 & \ldots & 0 & 0  \\
      1 & 0 & \ldots & 0 & -a_{1}   \\
		  0 & \ddots & \ddots & \vdots & \vdots  \\
		  \vdots & \ddots & \ddots & 0 & -a_{k-2}  \\
		  0 & \ldots & 0 & 1 &  -a_{k-1} \\
   \end{pmatrix}=\underbrace{\begin{pmatrix}
   0 & 0 & \ldots & 0 & 0  \\
      1 & 0 & \ldots & 0 & 0   \\
		  0 & \ddots & \ddots & \vdots & \vdots  \\
		  \vdots & \ddots & \ddots & 0 & 0  \\
		  0 & \ldots & 0 & 1 &  0 \\
   \end{pmatrix}}_{N}\underbrace{\begin{pmatrix}
   1 & 0 & \ldots & 0 & -a_{1}  \\
      0 & 1 & \ldots & 0 & -a_{2}   \\
		  \vdots & \ddots & \ddots & \vdots & \vdots  \\
		  \vdots & \ddots & \ddots & 1 & -a_{k-1}  \\
		  0 & \ldots & \ldots & 0 &  0 \\
   \end{pmatrix}}_{V}.\]
	
\noindent The matrix $N$ is nilpotent and the matrix $V$ is idempotent. Hence, $A$ can be written as a product of a nilpotent matrix and an idempotent matrix.
\\
\\\uwave{$\Rightarrow$ :} Suppose that $A$ can be written as a product of a nilpotent matrix and an idempotent matrix. 
\\
\\There exists $(N,V) \in M_{n}(\mathbb{K})^{2}$ such that $A=NV$, $N$ is nilpotent and $V$ is idempotent. Let $R(X)$ be the polynomial $X(X-1)$. We have $R(V)=0_{n,n}$. Hence, $V$ is diagonalizable and its eigenvalues belong to $\{0,1\}$. So, there exist $0 \leq k \leq n$ and $P \in GL_{n}(\mathbb{K})$ such that $PVP^{-1}={\rm diag}(\underbrace{0,\ldots,0}_{k},\underbrace{1,\ldots,1}_{n-k})$. Let $PNP^{-1}:=\begin{pmatrix}
   B & C  \\
   D & E 
   \end{pmatrix}$, with $B \in M_{k}(\mathbb{K})$, $C \in M_{k,n-k}(\mathbb{K})$, $D \in M_{n-k,k}(\mathbb{K})$ and $E \in M_{n-k}(\mathbb{K})$. Let $P_{1},\ldots,P_{n}$ be the invariant factors of $N$ and $Q_{1},\ldots,Q_{n-k}$ be the invariant factors of $E$. We have:
	\[M:=PAP^{-1}=(PNP^{-1})(PVP^{-1})=\begin{pmatrix}
   B & C  \\
   D & E 
   \end{pmatrix}\begin{pmatrix}
   0_{k,k}  & 0_{k,n-k}  \\
   0_{n-k,k} & I_{n-k}
   \end{pmatrix}=\begin{pmatrix}
   0_{k,k} & C  \\
   0_{n-k,k} & E
   \end{pmatrix}.\]
	
\noindent The matrices $M$ and $A$ have the same invariant factors. Besides, we have: $XI_{n}-M=\begin{pmatrix}
   XI_{k} & -C  \\
   0_{n-k,k} & XI_{n-k}-E
   \end{pmatrix}$. Let $1 \leq l \leq n$ and $\Delta_{l}$ be the greatest common divisor of the $l \times l$ minors of $XI_{n}-M$. We consider three cases.
\\
\\i) Suppose that $1 \leq l \leq k$. We can choose the rows and the columns indexed by $1,\ldots,l$. Since $l \leq k$, the minor choosen is $\left|
\begin{array}{ccc}
X & & \\[4pt]
  & \ddots & \\[4pt]
  & & X
\end{array}
\right|=X^{l}$. Hence, $\Delta_{l}$ is a power of $X$ or is equal to 1.
\\
\\ii) Suppose that $n-k+1 \leq l \leq n$. We choose $l$ row indices $1 \leq i_{1} < \ldots < i_{l} \leq n$ and $l$ column indices $1 \leq j_{1} < \ldots < j_{l} \leq n$. Since $l>n-k$ and $\{k+1,\ldots, n\}$ contains $n-k$ elements, at least one of the indices $j_{1},\ldots,j_{l}$ belongs to $\{1,\ldots,k\}$. In particular, $j_{1} \in \{1,\ldots,k\}$. Let $h:={\rm max}(u,~1 \leq u \leq l~{\rm and}~j_{u} \in \{1,\ldots,k\})$. We have two possibilities:
\begin{itemize}
\item Suppose that there exists $1 \leq u \leq h$ such that $j_{u} \notin \{i_{1},\ldots,i_{l}\}$. In this case the minor choosen contains a zero column and therefore is equal to 0.
\item Suppose that $\{j_{1},\ldots,j_{h}\} \subset \{i_{1},\ldots,i_{l}\}$. The minor choosen has the following form $d:=\left|
\begin{array}{cc}
XI_{h} & C'  \\[4pt]
0_{l-h,h} & E'
\end{array}
\right|$, with $E' \in M_{l-h}(\mathbb{K}[X])$. Since the matrix is block triangular, $d=X^{h}{\rm det}(E')=X^{h}T(X)$, with $T \in \mathbb{K}[X]$ satisfying $-\infty \leq {\rm deg}(T) \leq l-h$. 
\end{itemize}

\noindent The $l \times l$ minors are equal to zero or are non constant polynomial with a zero root. Besides, if we choose $\{i_{1},\ldots,i_{l}\}=\{j_{1},\ldots,j_{l}\}=\{1,\ldots,l\}$, we obtain a polynomial of degree $l$ having 0 as a root. Hence, $X$ divides $\Delta_{l}$.
\\
\\iii) Suppose that $k+1 \leq l \leq n-k$. Note that this case implies $k \leq E\left[\frac{n-1}{2}\right]$. Let $I:=\{i_{1},\ldots,i_{l-k}\}$ with $k+1 \leq i_{1} < \ldots < i_{l-k} \leq n$ and $J:=\{j_{1},\ldots,j_{l-k}\}$ with $k+1 \leq j_{1} < \ldots < j_{l-k} \leq n$. Let $E_{I,J}$ be the submatrix extracted from $XI_{n-k}-E$ with theses choices of indices. We choose the rows indexed by $1,\ldots,k,i_{1},\ldots,i_{l-k}$ and the columns indexed by $1,\ldots,k,j_{1},\ldots,j_{l-k}$. The minor of $XI_{n}-M$ obtained is $\left|
\begin{array}{cc}
XI_{k} & C''  \\[4pt]
0_{l-k,k} & E_{I,J}
\end{array}
\right|=X^{k}{\rm det}(E_{I,J})$. Let $\Delta^{'}_{l}$ be the greatest common divisor of all the minors obtained by considering all possible choices of $I$ and $J$. The polynomial $\Delta_{l}$ divides $\Delta^{'}_{l}$. Besides, 
\[\Delta^{'}_{l}={\rm pgcd}(X^{k}{\rm det}(E_{I,J}),~I,J)=X^{k}{\rm pgcd}({\rm det}(E_{I,J}),~I,J).\]
By Theorem \ref{23}, ${\rm pgcd}({\rm det}(E_{I,J}),~I,J)=Q_{1}(X)\ldots Q_{l-k}(X)$. Moreover, since $E$ is a principal submatrix of $PNP^{-1}$, we have, by Thompson's Theorem, $Q_{u} \mid P_{u+2k}$ for all $1 \leq u \leq l-k$. Since $N$ is nilpotent and $u+2k \leq l+k \leq n$, $P_{u+2k}$ is equal to 1 or is a power of $X$. Hence, $Q_{1}(X)\ldots Q_{l-k}(X)$ is equal to 1 or is a power of $X$. So, $\Delta^{'}_{l}$ is a power of $X$. Hence, $\Delta_{l}$ is equal to 1 or is a power of $X$.
\\
\\In the three cases, $\Delta_{l}$ is equal to 1 or is a non constant polynomial with a zero root. Let $v$ be the index of the first nontrivial invariant factor of $A$. By Theorem \ref{23}, $\Delta_{v}$ is the first nontrivial invariant factor of $A$. Hence, the first nontrivial invariant factor of $A$ has 0 as a root.

\end{proof}

\begin{ex}
{\rm Let $\mathbb{K}=\mathbb{Z}/5\mathbb{Z}$. Let $A:=\begin{pmatrix}
     2 & 2 & 1 & 0   \\
     2 & 0 & 1 & 4   \\
		 3 & 2 & 4 & 2  \\
		 1 & 1 & 3 & 0 
   \end{pmatrix}$ and $B:=\begin{pmatrix}
     2 & 3 & 4 & 3   \\
     2 & 2 & 4 & 0   \\
		 0 & 2 & 0 & 1  \\
		 4 & 1 & 3 & 4 
   \end{pmatrix}$.
\begin{itemize}
\item The invariant factors of $A$ are $1,1,X(X+2),X(X+2)$. Hence, by Theorem \ref{12}, $A$ can be written as a product of a nilpotent matrix and an idempotent matrix. 
\item The invariant factors of $B$ are $1,1,X+1,X^{2}(X+1)$. Hence, by Theorem \ref{12}, $B$ can not be written as a product of a nilpotent matrix and an idempotent matrix. 
\end{itemize}
}
\end{ex}

\subsection{Some concluding remarks}

\begin{prop}
\label{32}

Let $n \geq 1$ and $A \in M_{n}(\mathbb{K})$. The matrix $A$ can be written as a product of a nilpotent matrix and an idempotent matrix if and only if $A$ can be written as a product of an idempotent matrix and a nilpotent matrix.

\end{prop}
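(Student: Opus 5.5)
The plan is to use transposition, which swaps the order of a product and preserves both nilpotency and idempotency, and then to pull the transpose back to $A$ through similarity.

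\emph{Step 1.} Suppose $A=NV$ with $N$ nilpotent and $V$ idempotent. Transposing gives ${}^tA={}^tV\,{}^tN$. Here ${}^tV$ is idempotent, since ${}^tV^{2}={}^t(V^{2})={}^tV$, and ${}^tN$ is nilpotent, since ${}^tN^{m}={}^t(N^{m})$. So ${}^tA$ is a product of an idempotent matrix and a nilpotent matrix.

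\emph{Step 2.} Every square matrix over a field is similar to its transpose. This is the one point that needs justification, and it follows from the results already recalled. The matrices $XI_{n}-A$ and $XI_{n}-{}^tA={}^t(XI_{n}-A)$ have the same $l\times l$ minors up to transposition of the minor, hence the same gcd $\Delta_{l}$ for every $l$. By Theorem \ref{23}, $A$ and ${}^tA$ then have the same invariant factors, and by Proposition \ref{22} they are similar. So there is $Q\in GL_{n}(\mathbb{K})$ with $A=Q\,{}^tA\,Q^{-1}$.

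\emph{Step 3.} Combining the two steps,
\[A=(Q\,{}^tV\,Q^{-1})(Q\,{}^tN\,Q^{-1}).\]
Conjugation preserves idempotency and nilpotency, so the first factor is idempotent and the second is nilpotent. This proves the direction from nilpotent--idempotent to idempotent--nilpotent.

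\emph{Step 4.} The converse is symmetric. If $A=VN$ with $V$ idempotent and $N$ nilpotent, then ${}^tA={}^tN\,{}^tV$, and conjugating by the same $Q$ gives
\[A=(Q\,{}^tN\,Q^{-1})(Q\,{}^tV\,Q^{-1}),\]
a product of a nilpotent matrix and an idempotent matrix.

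A trick like the one in Proposition \ref{212} is not available here, because an idempotent matrix is generally not invertible. That is why the argument goes through transposition instead, and the only real content is the similarity of $A$ and ${}^tA$ in Step 2.

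An alternative route is to check that the criterion of Theorem \ref{12} is invariant under transposition and then build the factorization $V'N'$ explicitly on Frobenius blocks. That would require a new block computation, so the transposition argument above is the one I would use.
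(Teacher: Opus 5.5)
Your proof is correct and takes the same approach as the paper: transpose $A=NV$ to get ${}^tA={}^tV\,{}^tN$, then use the similarity of ${}^tA$ and $A$ to conjugate both factors. You also justify $A\sim{}^tA$ through the gcd of minors (Theorem \ref{23}) and Proposition \ref{22}, a step the paper only asserts.
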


\begin{proof}

Suppose that $A=NV$, with $N$ nilpotent and $V$ idempotent. We have ${}^t A={}^t V{}^t N$, ${}^t V$ is idempotent and ${}^t N$ is nilpotent. Since ${}^t A$ is similar to $A$, $A$ can be written as a product of an idempotent matrix and a nilpotent matrix. The proof of the reverse part is similar.

\end{proof}

\noindent We conclude with the two following remarks:
\begin{itemize}
\item The decomposition $A=NV$ is in general not unique. For instance, we have, with $\mathbb{K}=\mathbb{Z}/5\mathbb{Z}$:
\[\begin{pmatrix}
   0 & 0 \\
   1 & 0
   \end{pmatrix}\begin{pmatrix}
   1 & 1 \\
   0 & 0
   \end{pmatrix}=\begin{pmatrix}
   0 & 0 \\
   1 & 1  
   \end{pmatrix}=\begin{pmatrix}
   0 & 0 \\
   2 & 0
   \end{pmatrix}\begin{pmatrix}
   3 & 3 \\
   3 & 3
   \end{pmatrix}.\]
	\\
	
\item The nilpotent and the idempotent matrix do not commute in general. For instance, we have:
\[\begin{pmatrix}
   0 & 0 \\
   1 & 0
   \end{pmatrix}\begin{pmatrix}
   1 & 1 \\
   0 & 0
   \end{pmatrix}=\begin{pmatrix}
   0 & 0 \\
   1 & 1  
   \end{pmatrix}~~~~\begin{pmatrix}
   1 & 1 \\
   0 & 0
   \end{pmatrix}\begin{pmatrix}
   0 & 0 \\
   1 & 0
   \end{pmatrix}=\begin{pmatrix}
   1 & 0 \\
   0 & 0
   \end{pmatrix}.\]
\\
\end{itemize}

\end{document}